\DeclareSymbolFont{sfoperators}{OT1}{ptm}{m}{n}
\DeclareSymbolFontAlphabet{\mathsf}{sfoperators}
\numberwithin{equation}{section}
\newcommand{\keywords}[1]{\par\noindent\textbf{Keywords:} #1}
\newcommand{\msc}[1]{\par\noindent\textbf{MSC Classification:} #1}
\newtheorem{thm}{Theorem}[section]
\newtheorem*{thmA*}{Theorem A}
\newtheorem*{thmB*}{Theorem B}
\newtheorem{lemma}[thm]{Lemma}
\newtheorem{proposition}[thm]{Proposition}
\def\p{{\mathbb P}}
\def\e{{\mathbb E}}
\def\r{{\mathsf R}}
\def\z{{\mathbb Z}}
\def\N{{\mathbb N}}
\newcommand{\visitoneset}[1]{\mathcal{A}_{#1}}
\newcommand{\visitonenumber}[1]{g_1(#1)}
\newcommand{\curvevisitoneset}[1]{\mathcal{D}_{#1}}
\newcommand{\curvevisitonenumber}[1]{d({#1})}
\def\operator@font{\mathgroup\symsfoperators}
\title{The Exact Limsup Constant for Once-Visited Sites of One-Dimensional Simple Random Walk}
\author{Chenxu Feng  \\ Peking University \and Chenxu Hao\thanks{Corresponding author. E-mail: \texttt{haochenxu@csu.edu.cn}}  \\ Central South University}
\begin{document}

\maketitle

\begin{abstract}
For a one-dimensional simple random walk, let $g_1(n)$ denote the number of sites visited exactly once at time $n$. Major (1988) proved that
\begin{equation*}
\limsup_{n\to\infty}\frac{g_1(n)}{\log^2 n}=C\qquad a.s.
\end{equation*}
where $C$ is a positive and finite constant. While this result settled the question of existence, the exact value of $C$ remained unknown.

In this paper, we determine that $C=1/16$. The main novelty of our work lies in introducing a self-boosting iterative framework for analysis.
\end{abstract}

\keywords{Simple random walk, Local time, Rarely visited site.}

\msc{60F15, 60J55}

\section{Introduction}
Let $(S_n)_{n\geq 0}$ be a discrete-time simple random walk (SRW) on $\mathbb{Z}$  starting at $S_0 = 0$.  
For any $n\in\N$ and $s\in\z$, write 
\begin{equation*}
\xi(s,n)=\#\{0\le j\le n : S_j=s\},\qquad \mathcal{R}(n)=\{s\in\mathbb{Z}:\exists\ k\in[0,n],\ S_k=s\},
\end{equation*}
for the local time (number of visits) at site $s$ up to time $n$, and the range (set of visited sites) of the random walk by time $n$ respectively. Let $g_k(n)$ be the number of sites visited exactly $k$ times up to time $n$, that is, $$g_k(n):=\#\{s:\xi(s,n)=k\}.$$
Among these, $g_1$ is particularly interesting and  has been the focus of significant research interest. 
Intuitively, the set of once-visited sites can be linked to the ``points of increase'' of the SRW (see Dvoretzky, Erd\H{o}s and Kakutani \cite{DEK61} and Peres \cite{Pe96}).
An interesting result was established by Newman \cite{Ne84}, who proved that $\e\, g_1(n)=2$ for all $n\in\N$. Furthermore, Han \cite{CH25} proved the convergence $\mathbb{E}[g_k(n)] \to 2$ for any fixed $k \ge 2$.
\footnote{During C.H.'s visit to Sichuan University in September 2025, Yinshan Chang kindly informed C.H. that this convergence result was obtained in Yuhang Han's undergraduate thesis \cite{CH25} of which Y.C. is the supervisor.}

However, is it possible that as $n$ tends to infinity, $g_1(n)$ attains atypically large values at some $n$'s? Motivated by this question, Erd\H{o}s and R\'{e}v\'{e}sz (see \cite{Ma88} for details) posed the problem of identifying the scaling function  $\kappa(n)\to\infty$ for which $\frac{g_1(n)}{\kappa(n)}$ admits a non-degenerate limit, i.e.,
\begin{equation*}
\limsup\limits_{n\to\infty}\dfrac{g_1(n)}{\kappa(n)}=C\in(0,\infty)\quad\quad a.s.
\end{equation*}
This question was resolved by Major as follows.
\begin{thmA*}[\cite{Ma88}]\label{OVS-limsup-behavior-unknown-constant}
There exists a constant $0<C<\infty$ such that
\begin{eqnarray*}
\limsup_{n\to\infty} \dfrac{g_1(n)}{\log^2 n}=C~~~~~~a.s.
\end{eqnarray*}
\end{thmA*}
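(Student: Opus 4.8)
The plan is to reduce the almost-sure identity to a sharp large-deviation estimate for $g_1$ at a single time, of the form
\[
\mathbb{P}\big(g_1(n)\ge \ell\big)\;=\;\exp\!\big(-(4+o(1))\sqrt{\ell}\,\big)\qquad(\ell\to\infty),
\]
uniformly for $\ell$ in a suitable range (it suffices that this hold for $1\le\ell\le(\log n)^{3}$, say). Equivalently, with $\ell=\lambda(\log n)^2$ this reads $\mathbb{P}(g_1(n)\ge\ell)=n^{-(4\sqrt\lambda+o(1))}$, whose exponent equals $1$ exactly at $\lambda=1/16$. Granting this, the upper bound $\limsup\le 1/16$ is a direct Borel--Cantelli argument: for $\epsilon>0$ one has $4\sqrt{1/16+\epsilon}>1$, so $\mathbb{P}(g_1(n)\ge(1/16+\epsilon)\log^2 n)\le n^{-1-\delta}$ for some $\delta=\delta(\epsilon)>0$ and all large $n$; summing over $n$ (no subsequence is available since $g_1$ is not monotone, but none is needed) gives $g_1(n)\le(1/16+\epsilon)\log^2 n$ for all large $n$ a.s., and $\epsilon\downarrow0$. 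The lower bound $\limsup\ge1/16$ is the harder half: fixing $\epsilon>0$ and $A_n=\{g_1(n)\ge(1/16-\epsilon)\log^2 n\}$, the estimate gives $\mathbb{P}(A_n)\ge n^{-1+\delta}$ with $\delta=\delta(\epsilon)>0$, so $\sum_n\mathbb{P}(A_n)=\infty$; one then runs a second-moment (Kochen--Stone) argument over the times $n$, the crux being that for $m<n$ with $n-m$ not too small the events $A_m,A_n$ are essentially independent --- the near-extremal configuration witnessing $g_1(n)$ is produced in a short time window near $n$ and, once produced, is quickly dismantled by the walk, so its influence does not persist. This yields $\mathbb{P}(A_n\ \text{i.o.})>0$; since $\limsup g_1(n)/\log^2 n$ is unchanged by altering finitely many steps of the walk (each alteration shifts the path and so changes $g_1(\cdot)$ by a bounded amount), it is a.s.\ constant, hence $\mathbb{P}(\limsup\ge1/16-\epsilon)=1$, and $\epsilon\downarrow0$.

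Everything thus hinges on the tail estimate, whose engine is the \emph{self-boosting iteration} --- the paper's main novelty. The lower bound $\mathbb{P}(g_1(n)\ge\ell)\ge\exp(-(4+o(1))\sqrt\ell)$ is obtained by building the favorable configuration in $J\approx\sqrt{\ell}$ stages. After stage $j$ the walk sits in a controlled state carrying of order $j^2$ once-visited sites that are guaranteed not to be revisited before the target time, located at the frontier of an explored region of comparable size. The transition from stage $j$ to stage $j+1$ is a single ``boost move'' which, conditionally on the current state, succeeds with probability at least an absolute constant $p_0$ and, on success, appends of order $j$ new once-visited sites without disturbing the old ones. The self-boosting is the reason a move costs only a constant while producing a growing number (of order $j$) of new once-visited sites: the structure already in place fixes the relevant length scale, so the move is just a one-standard-deviation fluctuation relative to the current configuration. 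After the $J$ boosts one has $\ge\ell$ once-visited sites with probability at least $p_0^{\,J}=e^{-\Theta(\sqrt\ell)}$, and optimizing the trade-off between $p_0$ and the per-stage gain --- equivalently, optimizing the profile of region-sizes along the stages --- pins the constant in the exponent at exactly $4$.

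The matching upper bound $\mathbb{P}(g_1(n)\ge\ell)\le\exp(-(4-o(1))\sqrt\ell)$ is where I expect the real difficulty, and it is here that the iteration must be turned into an estimation scheme rather than a construction: given \emph{any} trajectory with $\ge\ell$ once-visited sites one has to exhibit a hierarchical accounting of its probabilistic cost that is bounded below by $(4-o(1))\sqrt\ell$, i.e.\ one must show that no competing mechanism --- deeper or repeated excursions, many scattered small clusters, recycling of old structure, atypical occupation profiles --- does better than the construction. Concretely I would (i) partition the once-visited sites according to the scale of the sub-excursion that abandoned them and, by a union bound and the strong Markov property, reduce to controlling one cluster per scale; (ii) establish a recursive superadditivity inequality for $-\log\mathbb{P}$ as a function of the cluster size that mirrors the $\sqrt{\cdot}$-self-similarity of the construction, the concavity of $\sqrt{\cdot}$ forcing the extremal (most probable) configuration to be the balanced, geometric one; and (iii) solve the recursion to recover the constant $4$. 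Carrying out (ii) uniformly over all trajectories --- ruling out every global conspiracy of the path --- is the heart of the argument.
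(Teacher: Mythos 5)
Two things. First, the statement you are proving is Major's theorem, which the paper only cites; what you actually sketch is the stronger Theorem 1.1 (the value $1/16$), and your route is not the paper's. The paper never touches fixed-time tail probabilities for $g_1(n)$: it confines all once-visited sites to the outward excursion in progress (Lemma \ref{lem1.3}), represents the excursions as i.i.d.\ copies of a walk from $1$ killed at $0$, proves the per-excursion estimate $\mathbb{P}\bigl(\max_{m<\sigma} d(m)\ge M\bigr)=\exp\bigl(-(2+o(1))\sqrt{M}\bigr)$ (Proposition \ref{prop 1.3}, via the self-boosting scheme), and runs Borel--Cantelli in the excursion index $i$, with $\log\sigma_i\sim 2\log i$ (Lemma \ref{lem1.2}) producing $1/16=\tfrac14\cdot\tfrac14$. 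In particular the paper's critical exponent is $2$, per excursion, with only about $\sqrt{n}$ excursions by time $n$ --- not your exponent $4$ per time step with $n$ time steps.

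Second, and this is the genuine gap: your entire argument hinges on the asserted two-sided estimate $\mathbb{P}(g_1(n)\ge\ell)=\exp(-(4+o(1))\sqrt{\ell}\,)$, which is never proved (your ``boosting'' sketch only addresses a lower-bound construction, calibrated to force the constant $4$) and which is in fact false in the relevant range, because large values of $g_1$ are extremely persistent in time. Concretely, take $\ell=\lambda\log^2 n$ with $\lambda$ slightly below $1/16$ and $T=e^{(4+o(1))\sqrt{\ell}}\le n$. With probability of order $\sqrt{T/n}$ the walk at time $n-T$ is within $\sqrt{T}$ of its running maximum; given this, the construction underlying Proposition \ref{prop3.1} (reach height $K\approx e^{2\sqrt{\ell}}\le\sqrt{T}$ beyond the old range, collect the sites via the sub-excursion boost, then keep them alive until time $n$ at only polylogarithmic cost) produces $g_1(n)\ge\ell$ at an additional cost that is $e^{-o(\sqrt{\ell})}$. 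Hence $\mathbb{P}(g_1(n)\ge\ell)\gtrsim e^{(2-o(1))\sqrt{\ell}}/\sqrt{n}=n^{2\sqrt{\lambda}-1/2-o(1)}$, which near $\lambda=1/16$ is $n^{-o(1)}$, not your $n^{-4\sqrt{\lambda}}\approx n^{-1}$; the same size-biasing shows the marginals remain non-summable for $\lambda$ in a whole interval above $1/16$. So your upper bound ``sum $\mathbb{P}(g_1(n)\ge(1/16+\epsilon)\log^2 n)$ over all $n$'' cannot close: the threshold $1/16$ is simply not encoded in the fixed-time marginals, but in the combination (number of independent excursions $\approx\sqrt{n}$) $\times$ (per-excursion exponent $2\sqrt{M}$) that the paper isolates. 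Two further justifications are also wrong as stated, though less central: near-critical configurations have spatial scale comparable to $\sqrt{n}$ and so are dismantled only after order $n$ steps, contradicting the independence heuristic behind your Kochen--Stone step; and altering one step of the walk shifts the whole future path and can change $g_1(n)$ by an unbounded amount, so your $0$--$1$ law needs a different argument (e.g.\ Kolmogorov's law applied to the i.i.d.\ excursion variables $D_i$, as the paper's decomposition provides).
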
 
The bounds $\frac{1}{256}\le C\le\frac{1}{4}$ were later established based on an argument communicated to Major by Cs\'{a}ki (see Remark 6 in \cite{Ma88}).
However, the precise value of the constant $C$ in the above theorem remained unknown. In this paper, we determine this constant exactly. Our main result is the following.

\begin{thm}\label{main result}
With probability $1$,
\begin{equation*}
\limsup_{n\to\infty} \dfrac{g_1(n)}{\log^2 n}=\frac{1}{16}.
\end{equation*}
\end{thm}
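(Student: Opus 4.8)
The plan is to establish the two matching bounds
$$\limsup_{n\to\infty}\frac{g_1(n)}{\log^2 n}\ \ge\ \frac1{16}\qquad\text{and}\qquad \limsup_{n\to\infty}\frac{g_1(n)}{\log^2 n}\ \le\ \frac1{16}\qquad a.s.,$$
each by a Borel--Cantelli argument along a slowly growing geometric subsequence $n_j$ (so that $\log n_{j+1}/\log n_j\to1$ and no spurious constant is introduced), after first replacing $g_1$ by a purely geometric feature of the trajectory. For the reduction, fix a time $m$ and a site $s$ interior to the range $\mathcal R(m)$ with $\xi(s,m)=1$. One-dimensionality forces the single visit to $s$ to split the path into a piece that stays in $(-\infty,s-1]$ and a piece that stays in $[s+1,\infty)$; iterating this, the interior once-visited sites lying to the right of the origin, listed in increasing order $s_1<s_2<\cdots<s_J$, form a \emph{monotone staircase}. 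Precisely, writing $\rho_s$ for the first passage time to $s$, the walk lies in $(-\infty,s_1-1]$ before $\rho_{s_1}$, in the slab $[s_i+1,s_{i+1}-1]$ throughout $(\rho_{s_i},\rho_{s_{i+1}})$ with every site of that slab visited at least twice, and in $[s_J+1,\infty)$ after $\rho_{s_J}$; symmetrically to the left of the origin. Consequently
$$g_1(m)=\bigl|U^+(m)\bigr|+\bigl|U^-(m)\bigr|+O(1),$$
where $U^\pm(m)$ are the two staircases (and the $O(1)$ accounts for the two endpoints of $\mathcal R(m)$ and the origin), so that the whole problem reduces to controlling the length of the longest staircase the walk can carry by time $n$.

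\emph{Lower bound.} For each large $j$ I would produce an event $E_j$, supported on one time window of $[0,n_j]$, on which the walk at some earlier time supports a positive staircase of length at least $(\tfrac1{16}-\varepsilon)\log^2 n_j$; taking a union over the essentially disjoint windows fitting into $[0,n_j]$ I would show $\sum_j\mathbb P(E_j)=\infty$, while distinct windows are independent enough for the second Borel--Cantelli lemma to apply. The construction of $E_j$ is the \emph{self-boosting iteration}: the staircase is grown in $\Theta(\log n_j)$ stages at geometrically increasing spatial and temporal scales. In each stage the walk performs one additional ``clean climb'' that starts from the current top of the staircase, deposits a fresh batch of once-visited sites inside a new slab, and --- because this climb reaches its target before falling back to the previous top --- never touches the sites created earlier, so the batches accumulate rather than cancel. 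The decisive feature is that both the probabilistic cost of a stage and the number of fresh once-visited sites it contributes scale with the stage index in such a way that chaining the stages (a) consumes only $n_j^{\,o(1)}$ units of time, so that many windows fit inside $[0,n_j]$, and (b) produces a staircase of length $(\tfrac1{16}-\varepsilon)\log^2 n_j$; the balance of (a) against (b) is exactly what forces the constant to be $\tfrac1{16}$.

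\emph{Upper bound.} For fixed $c>\tfrac1{16}$ I would estimate $\mathbb P\bigl(\exists\,m\le N:\ g_1(m)>c\log^2 N\bigr)$ by a union, over all admissible staircase shapes (a base point, a length $J$, and a gap sequence $d_1,\dots,d_{J-1}$), of the probability that the walk realizes that shape before time $N$; each such probability factorizes via the strong Markov property into the elementary $\tfrac12$-factors at the $J$ risers and the clean-slab-crossing factors, subject to the budget that the time spent crossing slabs is at most $N$. The combinatorial core is again iterative, used now as an a priori bound: cutting a putative length-$J$, height-$H$ staircase that is built within a time budget $T$ at a carefully chosen riser yields a recursive inequality bounding the probability of such a configuration by a product of two analogous probabilities for shorter, shallower pieces; solving this recursion shows that, off an event whose probability is summable in $N$ along $N=n_j$, any staircase present before time $N$ must have length at most $(\tfrac1{16}+\varepsilon)\log^2 N$. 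Comparing $g_1(n)$ for $n\in[n_j,n_{j+1}]$ with this bound at $n_{j+1}$ and letting the subsequence thicken upgrades the conclusion to the almost-sure upper bound.

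\emph{The main obstacle.} Recovering Major's qualitative theorem --- that the $\limsup$ is finite and positive --- needs both iterations only up to multiplicative constants and is comparatively soft. The genuine difficulty, and the reason for the self-boosting framework, is that identifying the \emph{exact} value $\tfrac1{16}$ requires running the two iterations with no slack: the staged construction must be the asymptotically most economical way to stockpile once-visited sites, and the recursion in the upper bound must be solved sharply, its fixed point being precisely what yields $\tfrac1{16}$. Extra care is then needed to verify that the extremal configuration is essentially one-sided --- so that the interaction of $U^+$ and $U^-$ neither halves nor doubles the constant --- and to ensure that the error terms accumulated over the $\Theta(\log n)$ stages remain $o(\log^2 n)$; these, rather than any single clever estimate, are where the bulk of the technical work sits.
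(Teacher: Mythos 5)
The structural reduction you start from (once-visited sites form monotone ``staircases'' of points of increase, so $g_1(m)=|U^+(m)|+|U^-(m)|+O(1)$) is sound and matches the paper's starting intuition, but the quantitative mechanism you give for the value $\tfrac1{16}$ is not right. You claim each attempt (``window'') consumes only $n^{o(1)}$ time, so that $n^{1-o(1)}$ essentially independent windows fit in $[0,n]$, and that balancing this against the staircase length is ``exactly what forces the constant to be $\tfrac1{16}$.'' Two problems. First, in one dimension the range is an interval, so a site can become once-visited only if it lies \emph{outside} the current range; hence fresh attempts are tied to the range-extending excursions, and by time $n$ there are only $n^{1/2+o(1)}$ of these (this is Lemma \ref{lem1.2}, $\log\sigma_j/\log j\to 2$), not $n^{1-o(1)}$ time windows. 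Second, the per-excursion tail is $\mathbb{P}\left(\max_{1\le m\le\sigma-1}d(m)\ge M\right)=\exp\left(-(2+o(1))\sqrt M\right)$ (Proposition \ref{prop 1.3}), and the extremal staircase of length $M=\tfrac1{16}\log^2 n$ spans height about $e^{2\sqrt M}=\sqrt n$, i.e.\ it sits inside a single excursion whose duration is comparable to $n$ itself, not $n^{o(1)}$. With your counting ($n^{1-o(1)}$ near-independent trials, each succeeding with probability $e^{-2\sqrt M}$) the threshold would come out at $\tfrac14\log^2 n$; the true constant $\tfrac1{16}=\tfrac14\cdot\tfrac14$ arises precisely from combining the exponent $2$ in the excursion tail with the fact that only $\sqrt n$ excursions have occurred by time $n$. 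So the step on which your whole calibration rests does not produce $\tfrac1{16}$ as described, and the ``independent enough windows'' claim has no renewal structure behind it (the paper gets exact i.i.d.\ trials from the $\tau_i,\sigma_i$ decomposition).

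The two bounds as sketched are also missing the key analytic input. You never identify the per-excursion (or per-height) large-deviation rate; the paper's engine is $\mathbb{P}(g(s)\ge C\log^2 s)=s^{-(2+o(1))C}$ for the walk conditioned to stay positive, obtained from the binomial moments $\mathbb{E}\binom{g(s)}{k}\approx\left(\tfrac{\log s}{2}\right)^k$ at order $k\asymp\log s\asymp\sqrt M$ (Lemmas \ref{lem1.6} and \ref{lem1.7}). Your upper-bound plan -- a union over all staircase shapes of length $J=M$ subject to a time budget -- fails as stated: the expected number of length-$J$ staircases is dominated by densely packed shapes and does not control the probability that one long staircase exists; one must work with moments of order $\asymp\sqrt M$ (equivalently, divide by $\binom{M}{k}$ as in Lemma \ref{lem1.7}), and the binding budget is the spatial/excursion one, not elapsed time. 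Your ``cut at a riser'' recursion is in spirit close to the paper's bootstrap (Lemmas \ref{lem4.2}--\ref{lem4.4} split the once-visited set at an intermediate height and iterate in the height exponent $c$ of $e^{c\sqrt M}$), but without the conditioned-walk moment estimates, the excursion decomposition, and the final conversion through $\log\sigma_j\sim 2\log j$ -- the step where the extra factor $\tfrac14$ actually enters -- the outline does not close.
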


As observed in Remark 4 of Major \cite{Ma88}, for any fixed $k\ge 2$, $\limsup_{n\to\infty}g_k(n)$ is also of order $\log^2 n$; however, our method does not yield the exact leading-order constant.

In comparison, the corresponding quantity $g_1^{(d)}(n)$ for the $d$-dimensional simple random walk, $d \geq 2$.
According to Erd\H{o}s and Taylor \cite{ET60} and Flatto \cite{Fl76}, it is known that almost surely
\begin{equation*}
\lim_{n\to\infty} \dfrac{g_1^{(2)}(n)\cdot\log^2 n}{\pi^2n}=1\quad\quad\mbox{and}\quad\quad\lim_{n\to\infty}\frac{g_1^{(d)}(n)}{n}=\gamma_d^2,
\end{equation*}
where $d\ge3$ and $\gamma_d$ is the escape probability of a simple random walk on $\mathbb{Z}^d$. 

A related quantity, the minimal local time defined as $f(n)=\min\{\xi(s,n):s\in\mathcal{R}(n)\}$, was introduced by Erd\H{o}s and R\'{e}v\'{e}sz \cite{ER87,ER91} to study its $\limsup$ behavior. After a series of developments by T\'{o}th \cite{To96} and R\'{e}v\'{e}sz \cite{Re13}, our previous work \cite{FH25} ultimately established that
\begin{equation*}
\limsup_{n\to\infty} \frac{f(n)}{\log\log n} = \frac{1}{\log 2}~~~~~~a.s.
\end{equation*}
This result implies that 
$$\liminf\limits_{n\to\infty}g_1(n)=0~~~~~~a.s.$$ 

Turning our attention from the number of least visited to the most frequently visited sites (also called {\it favorite sites}), we find a rich body of results.
For instance, T\'{o}th \cite{To01} and Ding and Shen \cite{DS18} established that a SRW on $\mathbb{Z}$ almost surely has exactly three favorite sites infinitely often, but never more. Extensions to higher dimensions have been studied by Erd\H{o}s and R\'{e}v\'{e}sz \cite{ER91}, as well as by the second author of this paper together with Li, Okada and Zheng \cite{HLOZ24}. 
There are many results on the favorite sites of SRW. See Kesten \cite{Ke65}, Erd\H{o}s and R\'{e}v\'{e}sz \cite{ER84}, Bass and Griffin \cite{BG85}, Cs\'{a}ki and F\"{o}ldes \cite{CF86}, Cs\'{a}ki and Shi \cite{CS98}, Cs\'{a}ki, R\'{e}v\'{e}sz and Shi \cite{CRS00}, Lifshits and Shi \cite{LS04}, Bass \cite{Ba23}, Dembo, Peres, Rosen and Zeitouni \cite{DPRZ01}, Rosen \cite{Ro05} and for comprehensive surveys, we refer to Shi and T\'{o}th \cite{ST00} and Okada \cite{Oka16}.

Our overall strategy is to decompose the entire path of the random walk into alternating ``inward'' and ``outward'' excursions by stopping times (Inspired by our previous work \cite{FH25} and referring to \eqref{Stopping-times} for the precise definitions, we capture the transitions between exploration and return to a previous range). This allows us to reduce the original problem to analyzing a sequence of i.i.d.~excursions.
More precisely, we study the probability of the {\it rare event} that each excursion contains unusually large number of once-visited sites.
An intuitive observation is that the set of sites visited only once is closely related to the ``points of increase'' at each excursion.
We finalize the proof by employing a self-boosting iterative framework that bounds the maximum number of once-visited sites per excursion. The power of this method lies in its ability to handle both the lower and upper bounds of the probability of {\it rare events}, which are separately derived in Sections \ref{se:3} and \ref{se:4}.

We now briefly outline the organization of this paper. Section \ref{se:2} contains the necessary preliminaries. The proof of our main result, Theorem \ref{main result}, is presented in Section \ref{Proof of main result}. Finally, a key proposition for Theorem \ref{main result} is proved in Section \ref{tech-section}.

\section{Preliminaries}\label{se:2}

We denote by $c$ and $C$ positive and finite constants whose values are universal but may change from line to line.
If $\{a_n\}$ and $\{b_n\}$ are non-negative sequences, then we write
$a_n\lesssim b_n$ if there exists $c>0$ such that $a_n \leq c\, b_n$ for all $n$, and $a_n \asymp b_n$ if $a_n\lesssim b_n$ and $b_n\lesssim a_n$. For a sequence $\{a'_n\}$, we write $a'_n=O(b_n)$ if $|a'_n| \lesssim b_n$.
For a sequence $\{c_n\}$, we write $c_n=o(b_n)$ if $\lim\limits_{n\to\infty}\frac{b_n}{c_n}=\infty$.

We now turn to random walk. 
Recall that $(S_n)_{n\geq 0}$ is a random walk on $\mathbb{Z}$, starting at the origin and let $\p^S$ stand for the probability measure. Write $\p=\p^S$ for short.
We also define corresponding $\sigma$-algebras:
\begin{equation}\label{sigma}
\mathcal{F}_{n}^{S}:=\sigma\{S_{[0,n]}\},
\end{equation}
where $S_{[0,n]}$ denotes the sub-path $\{S_{0},S_{1},...,S_{n-1},S_{n}\}$.

Let $M_n = \max\limits_{0 \leq k \leq n} S_k$ and $N_n = \min\limits_{0 \leq k \leq n} S_k$ denote the maximum and minimum positions of the random walk up to time $n$, respectively, and define the span $M_{n}-N_{n}:=\r_{n}$.
Then by the Law of Iterated Logarithm (\cite[Section 4.4 and 5.3]{Re13}),
\begin{align}\label{LIL-range}
    \dfrac{\log\r_n}{\log n}=\frac12~~~~~~~~a.s.
\end{align}
We define two sequences of stopping times that alternate between ``inward'' and ``outward'' excursions of the random walk. Formally, let $\tau_1 = 1$ and $M_0 = N_0 = 0$. For $i \geq 1$, define
\begin{align}\label{Stopping-times}
    &\sigma_i = \inf \left\{ k > \tau_i : S_k \in [N_{\tau_i - 1}, M_{\tau_i - 1}] \right\}, \nonumber\\
    &\tau_{i+1} = \inf \left\{ k > \sigma_i : S_k \notin [N_{\sigma_i}, M_{\sigma_i}] \right\}.
\end{align}
\begin{figure}[htbp]
    \centering
    \includegraphics[scale=0.45]{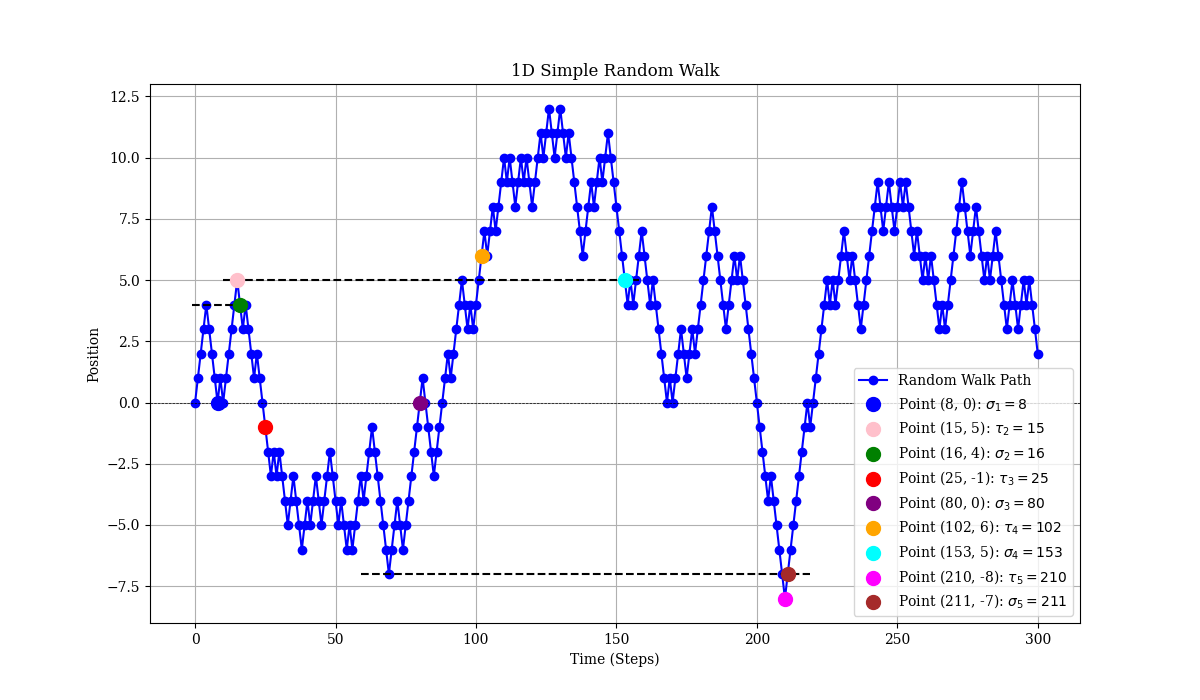} % 推荐无空格文件名
    \caption{Decomposition of the random walk path into inward and outward excursions using the stopping times defined in \eqref{Stopping-times}. The blue dots represent the walker's position, with selected stopping times $\sigma_i$ and $\tau_i$ marked. In this particular realization, $\tau_6 > 300$.}
    \label{fig:stopping_time} % 标签无空格
\end{figure}
See Figure~\ref{fig:stopping_time} for illustration. The figure shows a typical realization of the random walk path, with the stopping times $\sigma_i$ and $\tau_i$ marked. Each ``outward'' excursion between $\tau_i$ and $\sigma_i$ corresponds to the walker exploring new region, while the ``inward'' segments between $\sigma_i$ and $\tau_{i+1}$ represent returns to previously visited regions.
An important observation is that the time $\sigma_i$ can also be characterized as the first return to the pre-$\tau_i$ level:
\begin{equation*}
    \sigma_i = \inf \{ k > \tau_i : S_k = S_{\tau_i - 1} \}.
\end{equation*}

Finally, let $(T_n)$ be a SRW on $\mathbb{Z}$ starting at $T_1 = 1$ and stopped at $\sigma := \inf \{ n : T_n = 0 \}$. For each $i \geq 1$, we then define the {\it excursion process}
\begin{equation*}
T^i_n = | S_{\tau_i - 1 + n} - S_{\tau_i - 1} |, \quad \text{for } 1 \leq n \leq \sigma_i - \tau_i + 1.
\end{equation*}
By the strong Markov property applied at time $\tau_i - 1$, the process $\left( T^i_n \right)_{1 \leq n \leq \sigma_i - \tau_i + 1}$ is independent of $\mathcal{F}_{\tau_i}$ and equal in distribution to $\left( T_n \right)_{1 \leq n \leq \sigma}$. This construction allows us to analyze each outward excursion as an independent copy of a reflected random walk segment.

In the following, all asymptotic statements (e.g., involving $M, j, k, s \to \infty$) are to be understood as holding for all sufficiently large values of the indices, even when not explicitly stated.

\section{Proof of Theorem \ref{main result}}\label{Proof of main result}

We divide the proof of Theorem \ref{main result} into two lemmas and a proposition.

\begin{lemma}\label{lem1.2}
    \begin{equation*}
        \lim_{j\rightarrow \infty}\frac{\log(\sigma_j)}{\log j}=2\quad\quad\quad a.s.
    \end{equation*}
\end{lemma}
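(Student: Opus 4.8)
The plan is to reduce the claim to a law-of-large-numbers estimate for the range of the walk sampled along the stopping times, using the already-available LIL \eqref{LIL-range} to convert a spatial scale into a time scale. Concretely, it suffices to prove that $\log \r_{\sigma_j}/\log j \to 1$ almost surely: since $(\sigma_j)$ is a strictly increasing sequence of (a.s.\ finite, by recurrence and unboundedness of the SRW) integers, $\sigma_j \to \infty$, and applying \eqref{LIL-range} along $n = \sigma_j$ gives $\log \r_{\sigma_j}/\log \sigma_j \to \tfrac12$, i.e.\ $\log \sigma_j/\log \r_{\sigma_j} \to 2$; multiplying the two ratios then yields $\log \sigma_j/\log j \to 2$.

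The first task is to understand $\r_{\sigma_j}$. Between $\sigma_{i-1}$ and $\tau_i$ the walk stays inside $[N_{\sigma_{i-1}}, M_{\sigma_{i-1}}]$, while on $[\tau_i,\sigma_i]$ it stays strictly on one side of that interval (having stepped just outside it at time $\tau_i$, it cannot cross back through before time $\sigma_i$); hence the range only ever increases during the excursions $[\tau_i,\sigma_i]$, and there it increases by exactly $Y_i := \max_{1\le n\le \sigma_i-\tau_i+1} T^i_n \ge T^i_1 = 1$. Therefore $\r_{\sigma_j} = \r_{\sigma_1} + \sum_{i=2}^j Y_i$. By the distributional identity $T^i \overset{d}{=} T$ and the independence of $T^i$ from $\mathcal{F}_{\tau_i}$ recorded in Section~\ref{se:2}, the variables $(Y_i)_{i \ge 2}$ are i.i.d., and a gambler's ruin computation gives $\p(Y_i \ge m) = 1/m$ for every integer $m \ge 1$; in particular $Y_i$ has a heavy $1/m$ tail and infinite mean, but the partial sums are nondecreasing.

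The lower bound $\liminf_j \log \r_{\sigma_j}/\log j \ge 1$ is then immediate from $\r_{\sigma_j} \ge j - 1$. For the upper bound, fix $\epsilon > 0$ and work along $j = 2^k$: truncating each $Y_i$ at level $2^{k(1+\epsilon/2)}$, the probability that some $Y_i$ with $i \le 2^k$ exceeds this level is summable since $\sum_k 2^k\,\p(Y_1 > 2^{k(1+\epsilon/2)}) \asymp \sum_k 2^{-k\epsilon/2} < \infty$, and the truncated partial sum has mean $\asymp 2^k \sum_{m \le 2^{k(1+\epsilon/2)}} m^{-1} \asymp k\,2^k$, so Markov's inequality gives $\p\big(\sum_{i=2}^{2^k} Y_i > 2^{k(1+\epsilon)}\big) \lesssim k\,2^{-k\epsilon}$, again summable (for fixed $\epsilon$). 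Borel--Cantelli and monotonicity of the partial sums upgrade this to $\sum_{i=2}^j Y_i \lesssim j^{1+\epsilon}$ for all large $j$, whence $\limsup_j \log \r_{\sigma_j}/\log j \le 1 + \epsilon$; letting $\epsilon \downarrow 0$ and combining with the lower bound gives $\log \r_{\sigma_j}/\log j \to 1$, and the reduction above finishes the proof.

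I expect the only genuinely delicate point to be the first step: justifying the clean identity $\r_{\sigma_j} = \r_{\sigma_1} + \sum_{i=2}^j Y_i$ with the $Y_i$ i.i.d.\ and having exactly the $1/m$ tail — this rests on checking that each outward excursion lives on a single side of the previous range and that the new territory it opens up is precisely the running maximum of the associated $T^i$. Once that is in place, the rest is a routine heavy-tailed Borel--Cantelli argument along a geometric subsequence, with \eqref{LIL-range} doing all the work of translating between time and space. (One could instead route a two-sided bound on $\sigma_j$ directly through the i.i.d.\ excursion lengths $\sigma_i - \tau_i$, whose tail is $\asymp t^{-1/2}$, but passing through $\r_{\sigma_j}$ avoids having to analyze the more delicate inward-excursion lengths $\tau_{i+1} - \sigma_i$.)
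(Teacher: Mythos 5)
Your proposal is correct and follows essentially the same route as the paper: it identifies the range at the times $\sigma_j$ as a sum of i.i.d.\ increments with the $1/m$ tail (the paper's \eqref{T-distribution}--\eqref{eq:logvariable}), controls it by truncation, Markov's inequality and Borel--Cantelli along the dyadic subsequence $j=2^k$, and then converts the spatial scale into a time scale via \eqref{LIL-range}, exactly as in the paper's proof. The only differences are cosmetic (your truncation level $2^{k(1+\epsilon/2)}$ versus the paper's $j^{1+\epsilon}$, and your slightly more explicit justification of the identity $\r_{\sigma_j}=\r_{\sigma_1}+\sum_{i=2}^{j}Y_i$).
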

\begin{proof}
First, we connect the growth of $\sigma_j$ with the range of the random walk. Note that the increase of the range of $(S_n)$ during $[\tau_j,\sigma_j]$ is identical to the range of $(T_n)_{1\le n\le \sigma}$. Therefore, for $k\in\mathbb{Z}^+$,
\begin{equation}\label{T-distribution}
        \p\left( \r_{\sigma_{j+1}}\geq \r_{\sigma_j}+k \right)=\p\left( \max\limits_{1\leq n\leq \sigma}T_{n}\geq k\right)=\frac{1}{k}.
\end{equation}
Suppose $X_i$, $i\in \N$ are i.i.d. random variables with distribution
\begin{equation}\label{eq:logvariable}
        \p(X_i=k)=\frac{1}{k(k+1)}\text{~for~} k\in \z^+.
\end{equation}
Hence, the distribution of $\{\r_{\sigma_j}\}$ is identical to the distribution of $\left\{\sum\limits_{i=1}^{j}X_i\right\}$ with $j\in\z^+$. Furthermore, for any $C,\epsilon>0$, 
    \begin{align}
        \p\left(\sum\limits_{i=1}^{j}X_i\geq Cj^{1+\epsilon}\right)
        &\le\p\left(\mathop{\cup}_{i=1}^{j}\left\{X_i>j^{1+\epsilon}\right\}\right)+\p\left(\sum_{i=1}^{j}X_i\cdot1_{\{X_i\le j^{1+\epsilon}\}}\ge Cj^{1+\epsilon}\right)\nonumber\\
        &\leq j\ \p(X_i\geq j^{1+\epsilon})+\frac{j\ \e\left(X_i\cdot1_{\{X_i\leq j^{1+\epsilon}\}}\right)}{Cj^{1+\epsilon}}\nonumber\\
        &\leq j^{-\epsilon}(1+C^{-1}(1+\epsilon)\log j).\label{eq:1.3}
    \end{align}
    Taking $j=2^m$  with $m\in\z^+$ in \eqref{eq:1.3}, combining with Borel-Cantelli lemma, we conclude that
    \begin{align*}
        \limsup_{j\to\infty}\left(j^{-1-\epsilon}\sum\limits_{i=1}^{j}X_i\right)\leq \limsup_{m\to\infty}\left((2^{m})^{-1-\epsilon}\sum\limits_{i=1}^{2^{m+1}}X_i\right)=0\quad\quad a.s.
    \end{align*}
    As a result, for any $\epsilon>0$,
    \begin{equation}\label{eq:1.4}
        \limsup_{j\to\infty}\frac{\log\left(\sum\limits_{i=1}^{j}X_i\right)}{\log j}\leq 1+\epsilon \quad\quad a.s.
    \end{equation}
    Note that since $X_i\geq 1$, $\liminf\limits_{j\to\infty}\frac{\log \left(\sum_{i=1}^{j}X_i\right)}{\log j}\ge1$. Take $\epsilon\rightarrow 0$ in \eqref{eq:1.4}, we have
    \begin{equation}\label{eq:1.5}
        \lim_{j\to\infty}\frac{\log\r_{\sigma_j}}{\log j}{=} \lim_{n\to\infty}\frac{\log\left(\sum\limits_{i=1}^{j}X_i\right)}{\log j}=1\quad\quad a.s.
    \end{equation}
    Combining \eqref{eq:1.5} with \eqref{LIL-range} gives
    \begin{equation}
        \lim_{j\rightarrow \infty}\frac{\log\sigma_j}{\log j}=\lim_{j\rightarrow \infty}\frac{\log\sigma_j}{\log\r_{\sigma_j}}\cdot\frac{\log\r_{\sigma_j}}{\log j}=2\quad\quad a.s.
    \end{equation}
\end{proof}
Let $\visitoneset{n}$ be the set of sites that have been visited exactly once by $(S_k)_{0\le k\le n}$. The following lemma is intuitive and quite straightforward, however, for completeness, we still provide a proof.
\begin{lemma}\label{lem1.3}
    For any $i\in \N$ and $\sigma_i\leq n\leq \tau_{i+1}-1$, we have
    \begin{equation*}
        \visitoneset{n}\subset \{ N_{\sigma_i}, M_{\sigma_i}\}.
    \end{equation*}
\end{lemma}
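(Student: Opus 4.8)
The plan is to reduce the assertion to a deterministic statement about the fixed trajectory and then establish that statement by induction on $i$. First I would use that, since $(S_k)$ makes nearest-neighbour steps, its range $\mathcal{R}(m)$ at every time $m$ is the full interval $[N_m,M_m]$, and that on the whole of $[0,\tau_{i+1}-1]$ the walk stays inside $[N_{\sigma_i},M_{\sigma_i}]$ --- on $[0,\sigma_i]$ this is simply $\mathcal{R}(\sigma_i)$, while on $[\sigma_i,\tau_{i+1}-1]$ it is immediate from the definition \eqref{Stopping-times} of $\tau_{i+1}$. Hence, for $\sigma_i\le n\le\tau_{i+1}-1$, every site outside $[N_{\sigma_i},M_{\sigma_i}]$ has $\xi(s,n)=0$ and every site inside has $\xi(s,n)\ge\xi(s,\sigma_i)$, so the lemma follows once we show that
\begin{equation*}
\xi(s,\sigma_i)\ge 2\qquad\text{for all }i\ge 1\text{ and all }s\text{ with }N_{\sigma_i}<s<M_{\sigma_i}.
\end{equation*}

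To prove this, I would first record the structure of the stopping times. For $i\ge 2$, on $[\sigma_{i-1},\tau_i-1]$ the walk remains in $[N_{\sigma_{i-1}},M_{\sigma_{i-1}}]$, so $N_{\tau_i-1}=N_{\sigma_{i-1}}$ and $M_{\tau_i-1}=M_{\sigma_{i-1}}$; since $S_{\tau_i-1}$ lies in this interval while $S_{\tau_i}$ does not and the steps are $\pm1$, the site $S_{\tau_i-1}=S_{\sigma_i}$ must be one of its two endpoints. By symmetry take the ``right'' case $S_{\tau_i-1}=S_{\sigma_i}=M_{\sigma_{i-1}}$, $S_{\tau_i}=M_{\sigma_{i-1}}+1$ (the ``left'' case is identical with the roles of $N$ and $M$ swapped); then $N_{\sigma_i}=N_{\sigma_{i-1}}$, $M_{\sigma_i}\ge M_{\sigma_{i-1}}+1$, and the sub-path $(S_k)_{\tau_i\le k\le\sigma_i}$ starts at $M_{\sigma_{i-1}}+1$, attains the maximum $M_{\sigma_i}$, and returns to $M_{\sigma_{i-1}}$. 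For the base case $i=1$ we have $M_0=N_0=0$, $\tau_1=1$; assuming $S_1=1$ by symmetry, the walk stays positive on $[1,\sigma_1)$, so $N_{\sigma_1}=0$, and each site $s\in(0,M_{\sigma_1})$ is visited once on the way up from $S_1=1$ to $M_{\sigma_1}$ and once on the way down to $0$ at time $\sigma_1$.

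For the inductive step I would take $i\ge 2$, assume the displayed claim at $i-1$, and split an interior site $s\in(N_{\sigma_i},M_{\sigma_i})$ (in the right case) into three cases: if $N_{\sigma_{i-1}}<s<M_{\sigma_{i-1}}$, the induction hypothesis already gives $\xi(s,\sigma_{i-1})\ge 2$, hence $\xi(s,\sigma_i)\ge 2$; if $s=M_{\sigma_{i-1}}$, then $s=S_{\tau_i-1}=S_{\sigma_i}$ is visited at the two distinct times $\tau_i-1$ and $\sigma_i$; and if $M_{\sigma_{i-1}}<s<M_{\sigma_i}$, then the sub-path on $[\tau_i,\sigma_i]$ is $\le s$ at its start, strictly exceeds $s$ somewhere (it reaches $M_{\sigma_i}$), and is $<s$ at its end, so it meets level $s$ at least twice. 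The main thing to get right --- more a matter of care than a genuine obstacle --- is the bookkeeping that cleanly separates the two extreme sites (which are allowed to be once-visited, and indeed typically are) from the interior ones, together with making the identities $N_{\tau_i-1}=N_{\sigma_{i-1}}$, $M_{\tau_i-1}=M_{\sigma_{i-1}}$ precise so that the induction on the nested intervals closes up.
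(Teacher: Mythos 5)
Your argument is correct. It does, however, take a genuinely different route from the paper. You prove the stronger deterministic invariant that $\xi(s,\sigma_i)\ge 2$ for every interior site $s\in(N_{\sigma_i},M_{\sigma_i})$, by induction on the excursion index $i$: you identify $S_{\tau_i-1}=S_{\sigma_i}$ as an endpoint of the previous span, note $N_{\tau_i-1}=N_{\sigma_{i-1}}$, $M_{\tau_i-1}=M_{\sigma_{i-1}}$, and then use nearest-neighbour crossing arguments (up through level $s$ and back down) to get two visits for newly created interior sites, the endpoint $M_{\sigma_{i-1}}$ being revisited at time $\sigma_i$. The lemma then follows because up to time $n\le\tau_{i+1}-1$ the walk never leaves $[N_{\sigma_i},M_{\sigma_i}]$, so $\visitoneset{n}$ can only contain the two extreme sites. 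The paper instead argues directly and without induction: it takes $s\in\visitoneset{n}$ with unique visit time $k$, locates $k$ in some block $[\tau_j,\sigma_j]$ (ruling out $k>\sigma_j$ because the range cannot grow there), observes that $S_k\notin[N_{k-1},M_{k-1}]$ forces $S_k$ to be a strict new maximum or minimum, and then uses the absence of later visits to conclude $S_k=M_n=M_{\sigma_i}$ (or $N_n=N_{\sigma_i}$). Your version yields a slightly stronger structural statement (all interior sites are multiply visited by each time $\sigma_i$) at the cost of the induction and the left/right case bookkeeping; the paper's direct argument is shorter because it only tracks the single candidate site and its unique visit time. Both rest on the same two facts — the nested-interval structure of the stopping times and the nearest-neighbour property — so either proof could replace the other.
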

\begin{proof}
For any $s \in \visitoneset{n}$, let $k \in [0, n]$ be the unique time such that $S_k = s$. Let $j \in [1, i]$ be the index for which $\tau_j \le k \le \tau_{j+1} - 1$. Since $S_l \neq s$ for all $l \in [0, k-1]$, it follows that $S_k$ is not in the range of the preceding path, i.e.,
\begin{equation}
S_k \notin [N_{k-1}, M_{k-1}]. \label{eq:1}
\end{equation}

If $k > \sigma_j$, then $S_k \notin [N_{\sigma_j}, M_{\sigma_j}]$, which implies $\tau_{j+1} \le k$. Consequently, the interval $(S_k)_{\sigma_j \le k \le \tau_{j+1} - 1}$ is entirely contained within $[N_{\sigma_j}, M_{\sigma_j}]$, so the range remains unchanged. 
However, if $k > \sigma_j$, then by the definition of $\tau_{j+1}$ as the first time after $\sigma_j$ exiting $[N_{\sigma_j}, M_{\sigma_j}]$, we must have $k \ge \tau_{j+1}$. This contradicts the assumption $\tau_j \le k \le \tau_{j+1}-1$.

As a result, we have $\tau_j \leq k \leq \sigma_j$.
Now, suppose $S_{k-1} < S_k$. Then $S_k \geq N_{k-1}$, which, together with \eqref{eq:1}, implies $S_k \geq M_{k-1} + 1$. Since $n \geq \sigma_j$ and $S_{\sigma_j} = S_{\tau_j - 1} < S_k$, and given that $S_l \neq S_k$ for all $l$ with $\tau_j \le k < l \leq n$, it must be that $S_l < S_k$ for all $k < l \leq n$. This forces $S_k = M_n = M_{\sigma_i}$.
A symmetric argument, applied to the case $S_{k-1} > S_k$, shows that $S_k = N_{\sigma_i}$. This completes the proof of the lemma.
\end{proof}

\begin{proposition}\label{prop 1.3}
    Define 
    \begin{equation*}
        \curvevisitoneset{n} := \left\{ s : \exists \ k\in[1,n] \text{ such that } T_k = s, \text{ and } T_l \neq s \text{ for } 1 \leq l \leq n,\ l \neq k \right\}.
    \end{equation*}
    Let $\curvevisitonenumber{n}=\#\curvevisitoneset{n}$, then 
    \begin{equation*}
        \lim_{M\to\infty} \frac{\log\left(\p\left(\max\limits_{1\leq n\leq \sigma-1}\curvevisitonenumber{n}\geq M\right)\right)}{2\sqrt{M}}=-1.
    \end{equation*}
\end{proposition}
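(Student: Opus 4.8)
The plan is to first translate the combinatorial quantity $\curvevisitonenumber{m}$ into something geometric. In the spirit of Lemma~\ref{lem1.3}, a site $s$ with $\xi(s,m)=1$ is, except for the two extreme sites, a strict ``point of increase'' of $(T_n)_{n\le m}$: the walk crosses level $s$ exactly once, from below, and stays above $s$ thereafter up to time $m$. Hence, up to an additive constant, $\curvevisitonenumber{m}$ counts the levels that have been cleanly up-crossed and not yet revisited, and these occupy a strictly increasing set $v_1<v_2<\cdots$ whose first-passage times are also increasing, with $(T_n)$ confined to the slab $[v_i+1,v_{i+1}-1]$ between the passages of $v_i$ and $v_{i+1}$. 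Thus $\max_{1\le m\le\sigma-1}\curvevisitonenumber{m}$ equals, up to $O(1)$, the largest number of clean up-crossings that are simultaneously alive, and $\{\max_m \curvevisitonenumber{m}\ge M\}$ becomes (up to an $O(1)$ shift in $M$) the event that $(T_n)$ realizes such a ``surviving staircase'' of length $M$ before returning to $0$.

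For the upper bound $\p(\cdots)\le \exp(-(2-o(1))\sqrt M)$, a direct union bound over all staircases is hopeless: the gap weight $\sum_{a\ge1}\tfrac1{2a}$ diverges, so the summed weight is infinite once the height is unrestricted, and even truncating at a height $h$ and integrating against $\p(H\in dh)\asymp h^{-2}\,dh$ does not beat $\asymp 2^{-M}$, which is far from the truth. Instead I would set up the self-boosting recursion advertised in the introduction. Fixing a level $\ell$, the eventual climb of $(T_n)$ decomposes into its activity strictly below $\ell$ and the single excursion above $\ell$ that ultimately carries the walk higher; the clean up-crossings in these two regions essentially decouple, the ones above $\ell$ being produced by a rescaled copy of the same problem. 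Combined with the estimate $\p(\max_{1\le s\le\sigma}T_s\ge k)=1/k$ from \eqref{T-distribution} for the cost of over-shooting a height, this yields a recursive inequality for $F(M):=\p(\max_m \curvevisitonenumber{m}\ge M)$ (more precisely for a two-parameter version also recording the maximal height reached). The boosting step is then: assuming $-\log F(M)\ge c\sqrt M$ for all large $M$, insert this into the recursion to get $-\log F(M)\ge c'\sqrt M$ with $c'>c$, and iterate; the fixed point of the induced map on constants is exactly $2$.

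For the lower bound $\p(\cdots)\ge\exp(-(2+o(1))\sqrt M)$ I would build an explicit multi-scale event: over a geometrically growing sequence of height windows the walk is steered so as to pick up clean up-crossings which then survive because the walk keeps moving upward and never returns to them; the probability factorizes over the $\asymp\sqrt M$ scales into per-scale ``gambler's-ruin'' costs whose product is $\exp(-(2+o(1))\sqrt M)$. Equivalently one can run the recursion above in the reverse direction. A second-moment (Paley--Zygmund) input seems unavoidable here, since no single staircase has probability larger than $\asymp 2^{-M}\ll e^{-2\sqrt M}$, so the bulk of $F(M)$ must be harvested from the many admissible staircases of length $M$ realizable within height $\asymp e^{2\sqrt M}$, whose overlaps must be controlled.

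The main obstacle, I expect, is closing the scale recursion with the correct constant. Because there are $\asymp\sqrt M$ scales, the decoupling of crossings across each level, the trade-off between the height reached and the number of crossings packed below it, and the convergence of the boosting iteration must all be controlled with only $o(\sqrt M)$ total slack; in particular one must show the boosting map neither stalls below $2$ nor overshoots it. This is precisely the delicate point that pins the constant and ultimately forces the value $1/16$ in Theorem~\ref{main result}.
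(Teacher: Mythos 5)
Your geometric translation (once-visited sites $=$ surviving points of increase up to $O(1)$), the use of \eqref{T-distribution}, and the general slogan of a self-boosting recursion do match the paper's announced strategy, but the proposal is missing the quantitative engine that actually pins the constant: a sharp fixed-height estimate for the walk conditioned to stay positive (the $h$-process $G$), namely $\p\left(g(s)\geq C\log^2 s\right)=s^{-(2+o(1))C}$ (Proposition \ref{prop1.5}). In the paper this comes from an exact $k$-point computation of $\p(A\subset\mathcal G_s)$ via the martingale $(1/G_n)$ and optional stopping, giving $\e\binom{g(s)}{k}\approx\left(\tfrac{\log s}{2}\right)^k$ (Lemma \ref{lem1.6}), together with a binomial-moment-to-tail transfer (Lemma \ref{lem1.7}); because the correlations are computed exactly, no Paley--Zygmund/second-moment control of overlapping ``staircases'' is needed, contrary to what you assert is unavoidable. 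Without an input of this type, your claim that ``the fixed point of the induced map on constants is exactly $2$'' is an assertion, not a derivation, and your per-scale cost bookkeeping cannot be checked.

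More seriously, the lower-bound sketch as stated would not reach the constant. If one multiplies the climb cost to height $s$ (which is $1/s$ by \eqref{T-distribution}) by the cost of harvesting $M$ once-visited sites within height $s$ (which is $s^{-2M/\log^2 s+o(1)}$ by Proposition \ref{prop1.5}), the exponent $\log s+2M/\log s$ optimizes at $\log s=\sqrt{2M}$ and gives only $e^{-2\sqrt{2M}}$, i.e.\ constant $2\sqrt2$ rather than $2$; and any factorization into per-scale gambler's-ruin plus collection costs fares no better, since $\sum_j\bigl(w_j+2k_j/w_j\bigr)\geq 2\sqrt{2\sum_j k_j}$. The decisive idea in the paper, absent from your outline, is self-referential: pay only the climb cost $e^{-2\sqrt M}$ to reach $K_M\approx e^{2\sqrt M}$, collect merely $o(M)$ sites cheaply en route (Lemma \ref{lem3.2} and \eqref{eq:3.0}), and then use that above level $K_M$ the walk performs $\approx K_M/\log^2K_M$ i.i.d.\ fresh copies of the original problem, one of which realizes $\approx M$ sites with probability $\Theta(1)$ by the very definition of the unknown exponent $\Lambda$; this closes a consistency inequality that forces $\Lambda\geq-1$ (Proposition \ref{prop3.1}). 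The upper bound is likewise not a single fixed-point computation but a two-parameter bootstrap on $h(c)$ (Lemmas \ref{lem4.2}--\ref{lem4.4}, which even feed the lower bound back in), followed by integration over the maximum via \eqref{T-distribution}. So the proposal names the right ingredients at the level of heuristics, but the two mechanisms that determine the value $-1$ --- the exact moment computation for the conditioned walk and the retry/self-consistency argument replacing naive product bounds --- are missing, and the stated multi-scale product would provably fall short of $e^{-(2+o(1))\sqrt M}$.
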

The proof of this proposition is divided into a lower bound (Proposition \ref{prop3.1}) and an upper bound (Proposition \ref{prop4.1}) in the next sections,
but defer the details to the next section and now proceed to the proof of our main theorem.
\begin{proof}[Proof of Theorem \ref{main result} assuming Proposition \ref{prop 1.3}]
    By Lemma \ref{lem1.3}, for $\sigma_i\leq n\leq \tau_{i+1}-1$, $\visitonenumber{n}\leq 2$.
    For $\tau_i\leq n\leq \sigma_i-1$, we have
    \begin{equation*}
        \visitoneset{n}\subset \left\{N_{\sigma_{i-1}},M_{\sigma_{i-1}}\right\}\cup \mathcal{E}_i(n),
    \end{equation*}
    where $\mathcal{E}_i(n):=\left\{s: T_{k-\tau_i+1}^i=s\text{~for~some~} k\in[\tau_i, n] \text{ and } T_{l-\tau_i+1}^i \neq s \text{ for } \tau_i \leq l \leq n,\ l \neq k \right\}$ with $i\ge1$. Denote $d_i(n)=\# \mathcal{E}_i(n)$.
    Then
    \begin{equation}\label{eq:1.8}
        \max\limits_{\tau_i\leq n\leq \sigma_i-1}d_i(n)\leq \max\limits_{\tau_i\leq n\leq \sigma_i-1}\visitonenumber{n}\leq \max\limits_{\tau_i\leq n\leq \sigma_i-1}d_i(n)+2.
    \end{equation}
    Notice that $D_i:=\max\limits_{\tau_i\leq n\leq \sigma_i-1}d_i(n)$ are i.i.d.~random variables with the same distribution as $\max\limits_{1\leq n\leq \sigma-1}\curvevisitonenumber{n}$. Therefore, by Lemma \ref{lem1.2},
    \begin{equation}\label{eq:1.9}   
        \limsup_{n\to\infty}{\frac{\visitonenumber{n}}{\log^2 n}}=\limsup_{i\to\infty} \frac{\max\limits_{\tau_i\leq n\leq \sigma_i-1}d_i(n)}{\log^2\sigma_i}=\frac{1}{4}\limsup_{i\to\infty} \frac{\max\limits_{\tau_i\leq n\leq \sigma_i-1}d_i(n)}{\log^2 i}.
    \end{equation}
    By Proposition \ref{prop 1.3}, for any $C>0$,
    \begin{equation}\label{eq:1.10}
        \p\left(\max\limits_{\tau_i\leq n\leq \sigma_i-1}d_i(n)\geq C\log^2 i\right)=\exp\left({-(1+o(1))2\sqrt{C}\log i}\right).
    \end{equation}
The sum of the series converges if $2\sqrt{C} > 1$ and diverges if $2\sqrt{C} < 1$. By the Borel-Cantelli lemma, the threshold is therefore $C = \frac14$.
Then plugging \eqref{eq:1.10} and the Borel-Cantelli lemma into \eqref{eq:1.9} gives Theorem \ref{main result}.
\end{proof}

\section{Proof of Proposition \ref{prop 1.3}}\label{tech-section}
The rest of the paper is devoted to proving Proposition \ref{prop 1.3}. We begin in Section \ref{Tech-esti} by establishing the required technical estimates. The main steps of the proof of the lower and upper bounds are then presented in Sections \ref{se:3} and \ref{se:4}, respectively.

\subsection{Technical Estimates}\label{Tech-esti}
Let $(G_n)_{n\ge 1}$ be a SRW starting at $G_1=1$, conditioned never to hit $0$. 
That is, $(G_n)$ is a $h-$process.  In other words, starting from $G_1=1$ and with respect to its natural filtration $\mathcal{F}^{G}_{n}$, its transition probabilities are explicitly given as follows:
\begin{align*}
    &\p\left(G_{n+1}=G_n+1\mid \mathcal{F}^{G}_{n}\right)=\frac{G_n+1}{2G_n},\\
    &\p\left(G_{n+1}=G_n-1\mid \mathcal{F}^{G}_{n}\right)=\frac{G_n-1}{2G_n}.
\end{align*}
Define $\tau^G_s:=\inf\{n:G_n=s\}$ as the stopping time when  $(G_n)$ hits $s$ for the first time. Define
\begin{equation*}
    \mathcal{G}_s:=\left\{ x : \exists \ k\in[1, \tau^G_s] \text{ such that } G_k = x, \text{ and } G_l \neq x \text{ for } 1 \leq l \leq \tau^G_s,\ l \neq k \right\}
\end{equation*}
which is the set visited once at time $\tau^G_s$. Let $g(s)=\#\mathcal{G}_s$.
\begin{proposition}\label{prop1.5}
For any $\epsilon>0$, $0<c_1<c_2$, there exists $N=N(\epsilon,c_1,c_2)>0$ such that for any $s\geq N$, $C\in [c_1,c_2]$, we have
    \begin{equation*}
       \exp\left({-(2+\epsilon)C\log s}\right)\leq \p\left(g(s)\geq C\log^2 s\right)\leq \exp\left({-(2-\epsilon)C\log s}\right).
    \end{equation*}
\end{proposition}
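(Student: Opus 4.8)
The plan is to translate everything into the up-crossing (local-time) structure of the $h$-process and then estimate it with a bootstrap. The first step is the elementary observation that, for the stopped path $(G_1,\dots,G_{\tau^G_s})$, a site $y$ with $2\le y\le s-1$ lies in $\mathcal{G}_s$ exactly when the edges $\{y-1,y\}$ and $\{y,y+1\}$ are each traversed exactly once — equivalently, when $\tau^G_y$ is simultaneously the first and the last visit to $y$, i.e.\ $y$ is a (strict) point of increase of $G$ before $\tau^G_s$. Writing $u_y$ for the number of up-crossings of $\{y,y+1\}$ before $\tau^G_s$, this reads $g(s)=\#\{y:u_{y-1}=u_y=1\}+O(1)$, and the once-visited sites form maximal ``clean runs'', i.e.\ maximal blocks of consecutive edges along which $u\equiv 1$. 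The usefulness of the reformulation is that the joint law of the $u_y$ is completely explicit: applying the strong Markov property at the first-passage times $\tau^G_y$ and optional stopping to the martingale $1/G_n$ gives $\p_{y+1}(\tau^G_s<\tau^G_y)=\frac{s}{(y+1)(s-y)}$, and, since the $h$-process conditioned to avoid a level is again a shifted $h$-process, one obtains a clean Markovian description (as $y$ increases) of where the clean runs start and how long they last.

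For the lower bound $\p(g(s)\ge C\log^2 s)\ge\exp(-(2+\epsilon)C\log s)$ I would produce, and then compute the probability of, an explicit family of excursion patterns of $G$ forcing at least $C\log^2 s$ once-visited sites: a prescribed interleaving of ``clean ascent'' segments (which directly create runs of once-visited sites) with freely wandering segments, the clean segments placed at a carefully chosen sequence of heights. Each clean segment contributes, via the gambler's-ruin identity above, an explicit factor, each wandering segment costs essentially nothing, and summing over the admissible patterns while optimizing the placement and the lengths of the clean segments is what produces the exponent $-2C\log s$ as $\epsilon\to0$. A parallel route is a truncated second-moment argument, using the one- and two-point probabilities $\p(y\in\mathcal{G}_s)$ and $\p(\{y,y'\}\subset\mathcal{G}_s)$, which come out of exactly the same strong-Markov decomposition.

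The upper bound $\p(g(s)\ge C\log^2 s)\le\exp(-(2-\epsilon)C\log s)$ I would attack by a union bound over the clean-run configuration, sharpened by the self-boosting idea. Decompose $\{g(s)\ge C\log^2 s\}$ according to the positions and lengths of the clean runs; bound the probability of each configuration by the strong-Markov/gambler's-ruin factorization; and sum. A first, crude version of such a bound already shows $\p(g(s)\ge C\log^2 s)\le\exp(-c'C\log s)$ for some $c'>0$; one then feeds it back into itself — a clean run is followed by a fresh shifted $h$-process conditioned to avoid the level just vacated, so the ``remaining'' problem above any run is a rescaled copy of the original — and the iteration drives the constant up to $2-\epsilon$. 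Throughout one must keep the $o(1)$ errors (in the $h$-process gambler's-ruin asymptotics and in the enumeration of configurations) uniform for $C\in[c_1,c_2]$, which is what dictates the threshold $N(\epsilon,c_1,c_2)$.

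The hard part will be pinning down the leading constant $2$ rather than merely some positive constant. Both the lower-bound construction and the upper-bound union bound naturally yield an exponent of the form $-\kappa\,C\log s$, and making the two values of $\kappa$ agree forces one to control not just the marginal probabilities of $\{y\in\mathcal{G}_s\}$ but the full correlation structure of the clean runs, and to carry that control through the bootstrap without loss. A secondary technical nuisance is the boundary behaviour — the sites near $1$, where the $h$-transform bias is strong, and the sites near $s$, where the stopping at $\tau^G_s$ distorts the law of the $u_y$ — which must be handled without spending a constant.
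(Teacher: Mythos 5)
Your first step matches the paper: once-visited sites of the $h$-process are points of increase, and the probability that a prescribed set $\{x_1<\dots<x_k\}$ is once-visited factorizes, via the strong Markov property and optional stopping for the martingale $1/G_n$, into gambler's-ruin factors $\prod_i\frac{x_{i+1}}{2x_i(x_{i+1}-x_i)}\cdot\frac{s}{2x_k(s-x_k)}$. But the core of the proposition -- the sharp constant $2$ -- is not reached by either of your routes. For the lower bound, note that \emph{any single} configuration of $C\log^2 s$ once-visited sites has probability at most $2^{-C\log^2 s}$ (each site costs at least a factor $1/2$), which is super-polynomially smaller than the target $s^{-2C}$; in particular your ``clean ascent'' runs are exactly the most expensive configurations. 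The whole exponent must therefore come from the entropy of configurations, and ``summing over admissible patterns'' is not a lower bound for the probability of their (heavily overlapping) union: the sum is essentially $\e\binom{g(s)}{K}$ with $K=C\log^2 s$, which is enormous and dominated by paths with $g(s)\gg K$. One needs either a genuine disjointification/tilting argument or, as the paper does, a moment-to-tail transfer: Lemma \ref{lem1.6} computes $\e\binom{g(s)}{k}=\bigl(\frac{(1\pm\epsilon)\log s}{2}\bigr)^k$ for all $k\asymp\log s$ (by converting the sum over configurations into a sum over compositions $a_1+\dots+a_{k+1}=s$ of $\frac{1}{a_1\cdots a_{k+1}}$), and Lemma \ref{lem1.7} (adapted from Major) converts these into the two-sided tail bound, with a crucial truncation step showing the moment at $k_1=\lfloor 2C\log s\rfloor$ is not carried by the event $\{g(s)\ge(C+\epsilon_0)\log^2 s\}$. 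Your fallback ``truncated second moment with the one- and two-point probabilities $\p(y\in\mathcal{G}_s)$, $\p(\{y,y'\}\subset\mathcal{G}_s)$'' cannot work in principle: $C\log^2 s$ is far above $\e g(s)\asymp\frac{\log s}{2}$, and Paley--Zygmund with only second moments sees deviations at the mean scale, not large deviations; correlation functions of order $k\asymp\log s$ are indispensable.

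The upper bound has the same missing quantitative heart. A union bound over size-$K$ clean-run configurations is Markov's inequality with $\binom{g(s)}{K}$ and diverges (its expectation is $\asymp(\frac{\log s}{2})^K\gg1$); the constant $2$ in the paper comes from Markov applied to $\binom{g(s)}{k_1}$ at the very different scale $k_1=\lfloor2C\log s\rfloor$, combined with Stirling, i.e.\ from the precise factorial-moment asymptotics you never compute. Your ``self-boosting'' iteration is asserted rather than constructed: no mechanism is identified that forces the fixed point of the iteration to be exactly $2$, and in the paper the bootstrapping framework appears only later (Lemmas \ref{lem4.2}--\ref{lem4.4}, for the excursion-maximum quantity $h(c)$) and \emph{uses} Proposition \ref{prop1.5} as an input (e.g.\ in \eqref{main-lem4.9}), so it cannot substitute for proving it.
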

By adapting ideas from \cite[Theorem 2]{Ma88}, Proposition \ref{prop1.5} can be proved by verifying the following two lemmas. We remark that the first lemma is similar to \cite[Lemma 4]{Ma88}.
\begin{lemma}\label{lem1.6}
For any $0<\epsilon<1$, $0<c_1<c_2$, there exists $N=N(\epsilon,c_1,c_2)>0$ such that for any $s\geq N$, $k\in [c_1\log s,c_2\log s]$, we have
\begin{equation*}
    \left(\frac{(1-\epsilon)\log s}{2}\right)^k\leq \e \left[\binom{g(s)}{k}\right]\leq \left(\frac{(1+\epsilon)\log s}{2}\right)^k.
\end{equation*}
\end{lemma}

\begin{lemma}\label{lem1.7}
    If a sequence of random variables $X_s\in \N$ satisfy that for any $0<\epsilon<1$, $0<c_1<c_2$, there exists $N=N(\epsilon,c_1,c_2)>0$ such that for any $s\geq N$, $k\in [c_1\log s,c_2\log s]$, 
\begin{equation*}
    \left(\frac{(1-\epsilon)\log s}{2}\right)^k\leq \e \left[\binom{X_s}{k}\right]\leq \left(\frac{(1+\epsilon)\log s}{2}\right)^k,
\end{equation*}
then for any constants $\epsilon>0$, $0<c_1<c_2$, there exists $N'=N'(\epsilon,c_1,c_2)>0$ such that for any $s\geq N'$, $C\in [c_1,c_2]$, it holds that
    \begin{equation*}
       \exp\left(-(2+\epsilon)C\log s\right)\leq \p\left(X_s\geq C\log^2 s\right)\leq \exp\left(-(2-\epsilon)C\log s\right).
    \end{equation*}
\end{lemma}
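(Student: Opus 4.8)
The plan is to extract both inequalities from the single factorial‑moment estimate in the hypothesis, using it as a Chernoff input for the upper bound and, in a more delicate way that \emph{feeds the upper bound back in}, through a layer‑cake decomposition for the lower bound. Write $\mu=\mu_s:=\tfrac{\log s}{2}$, so the hypothesis reads $((1-\delta)\mu)^k\le \e\binom{X_s}{k}\le((1+\delta)\mu)^k$ for $k\in[c_1\log s,c_2\log s]$ and any fixed $0<\delta<1$; crucially the admissible window $[c_1,c_2]$ of exponents $k$ may be taken as wide as we please, and the target accuracy $\epsilon$ in the conclusion will be reached by letting $\delta$ (and a second auxiliary parameter coming from the upper bound) be small.

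\emph{Upper bound.} Since $\{X_s\ge C\log^2 s\}$ forces $\binom{X_s}{k}\ge\binom{\lceil C\log^2 s\rceil}{k}$ for any $k\le C\log^2 s$, Markov's inequality and the moment hypothesis give $\p(X_s\ge C\log^2 s)\le ((1+\delta)\mu)^k\big/\binom{\lceil C\log^2 s\rceil}{k}$. Up to $(\log s)^{O(1)}$ factors (Stirling) the right side is $\big(\tfrac{(1+\delta)\mu k}{e\,C\log^2 s}\big)^k$, minimised at $k=k_s:=\lceil \tfrac{2C}{1+\delta}\log s\rceil$, where it equals $\exp\!\big(-(1+o(1))\tfrac{2C}{1+\delta}\log s\big)$. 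Choosing $\delta$ small enough that $\tfrac{2}{1+\delta}>2-\epsilon$, and the window so that $k_s\in[c_1\log s,c_2\log s]$ for all admissible $C$, this gives $\p(X_s\ge C\log^2 s)\le e^{-(2-\epsilon)C\log s}$ for all large $s$, uniformly in $C\in[c_1,c_2]$.

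\emph{Lower bound.} This is the crux, and it uses the bound just proved: applying it with a small auxiliary $\delta_0>0$ yields $\p(X_s\ge t)\le e^{-(2-\delta_0)t/\log s}$ for $t$ in a window $[a_0\log^2 s,\,a_2\log^2 s]$, with $a_0$ as small as we wish once the window of $k$'s is widened. Fix a small $\eta>0$ with $(2+\eta)\big(1+\log(1+\eta)\big)<2+\tfrac{\epsilon}{2}$, set $k=k_s:=\lceil(2+\eta)C\log s\rceil$ and $m:=\lceil C\log^2 s\rceil$, and split $\e\binom{X_s}{k}=\e[\binom{X_s}{k}1_{X_s<m}]+\e[\binom{X_s}{k}1_{X_s\ge m}]$, estimating each piece through $\binom{j}{k}=\sum_{t=k}^{j}\binom{t-1}{k-1}$ in terms of tails $\p(X_s\ge t)$. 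The structural point is that for $k\approx(2+\eta)C\log s$ the summand $t\mapsto\binom{t-1}{k-1}e^{-(2-\delta_0)t/\log s}$ peaks at $t^\star\approx\tfrac{k\log s}{2-\delta_0}$, which sits just to the \emph{right} of $m$. Then: (i) $\e[\binom{X_s}{k}1_{X_s<m}]\le\binom{a_0\log^2 s}{k}+\sum_{a_0\log^2 s<t<m}\binom{t-1}{k-1}e^{-(2-\delta_0)t/\log s}$ is a sum over the increasing part of the summand, hence dominated by its last term $\asymp\binom{m}{k}\,e^{-(2-\delta_0)m/\log s}$ times a $\tfrac{\log s}{\eta}$ factor — and the decisive gain is the factor $e^{-(2-\delta_0)m/\log s}=s^{-(2-\delta_0)C}$, which against $\binom{m}{k}\approx\big(\tfrac{e\mu}{1+\eta/2}\big)^{k}$ forces this whole piece to be $o\big(((1-\delta)\mu)^k\big)$ provided $\delta,\delta_0,a_0$ are small in terms of $\eta$; thus $\e[\binom{X_s}{k}1_{X_s\ge m}]\ge\tfrac34((1-\delta)\mu)^k$ for large $s$. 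And (ii) with $p:=\p(X_s\ge m)$ and $\e[\binom{X_s}{k}1_{X_s\ge m}]=p\binom{m}{k}+\sum_{t>m}\binom{t-1}{k-1}\p(X_s\ge t)$, bounding $\p(X_s\ge t)\le p$ for $m<t\le T$ and $\p(X_s\ge t)\le e^{-(2-\delta_0)t/\log s}$ for $t>T$ with $T\approx(1+\eta)t^\star$, the tail $\sum_{t>T}\binom{t-1}{k-1}e^{-(2-\delta_0)t/\log s}$ now runs over the decaying part (many standard deviations past the peak) and is again $o\big(((1-\delta)\mu)^k\big)$, so $p\,\binom{T}{k}\ge\tfrac12((1-\delta)\mu)^k$. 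Inserting $\binom{T}{k}\approx(eT/k)^k$ with $T/k\approx\tfrac{(1+\eta)\log s}{2-\delta_0}$ gives $p\ge\exp\!\big(-(1+o(1))(2+\eta)(1+\log(1+\eta))\,C\log s\big)\ge e^{-(2+\epsilon)C\log s}$ for all large $s$, uniformly in $C\in[c_1,c_2]$.

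The main obstacle is precisely this lower bound. A soft second‑moment / Paley--Zygmund argument would require $\binom{m}{k}\lesssim\e\binom{X_s}{k}$, hence $k\gtrsim 2e\,C\log s$, and this corrupts the exponent by a spurious factor $e$ — a loss one can check already occurs for a genuine geometric variable with the prescribed moments. The remedy, and where all the care goes, is to run the moment estimate at the ``physical'' scale $k\approx 2C\log s$ and \emph{borrow} the missing decay $s^{-(2-\delta_0)C}$ from the already‑established upper tail; making this rigorous demands careful tracking of the position of the peak $t^\star$ relative to $m$ and $T$, control of the negative‑binomial‑type tails $\sum_{t>T}\binom{t-1}{k-1}\rho^t$ past their mode, and a consistent hierarchy of smallness $a_0,\delta,\delta_0\ll\eta\ll\epsilon$.
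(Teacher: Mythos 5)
Your upper bound is essentially the paper's: Markov's inequality applied to $\binom{X_s}{k}$ at order $k\asymp 2C\log s$ (the paper takes $k_1=\lfloor 2C\log s\rfloor$ and absorbs the $(1+\epsilon_0)$ into the exponent rather than optimizing $k$, an immaterial difference). For the lower bound you take a genuinely different route. The paper never recycles the upper tail bound: keeping the single order $k_1=\lfloor 2C\log s\rfloor$, it shows the $k_1$-th factorial moment is carried by $\{(C-\epsilon_0)\log^2 s\le X_s\le (C+\epsilon_0)\log^2 s\}$ by comparing factorial moments of \emph{different orders} --- for $r\ge (C+\epsilon_0)\log^2 s$ it bounds $\binom{r}{k_1}$ by $\binom{r}{k_2}$ times an explicit ratio, $k_2=\lfloor 2(C+\epsilon_0)\log s\rfloor$, and invokes the hypothesis at order $k_2$ (and symmetrically at a lower order for the event $\{X_s\le (C-\epsilon_0)\log^2 s\}$); the second-order ``Gaussian'' gain $\exp(-\epsilon_0^2\log s/(2(C+\epsilon_0)))$ beats the slack $(1+\epsilon_1)^{k_1}$ once $\epsilon_1\lesssim\epsilon_0^2$, and dividing by $\binom{\lfloor (C+\epsilon_0)\log^2 s\rfloor}{k_1}$ yields the bound (for the slightly shifted threshold $(C-\epsilon_0)\log^2 s$). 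You instead bootstrap the already-proved tail bound through the layer-cake identity $\e\binom{X_s}{k}=\sum_t\binom{t-1}{k-1}\mathbb{P}(X_s\ge t)$ at the slightly inflated order $k\approx(2+\eta)C\log s$, exploiting that the negative-binomial peak $t^\star$ sits just right of $m$; this is the same mechanism in probabilistic clothing (your gain is of size $\eta^2$, so the hierarchy you need is really $\delta,\delta_0\lesssim\eta^2$, which your ``small in terms of $\eta$'' covers), your correct diagnosis being that a soft Paley--Zygmund at the order where $\binom{m}{k}\ll\mu^k$ would lose a factor $e$ in the exponent. Your version has the mild advantage of bounding $\mathbb{P}(X_s\ge C\log^2 s)$ directly rather than at threshold $(C-\epsilon_0)\log^2 s$, while the paper's stays entirely within moment comparisons. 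One loose end to patch: in step (ii) you use $\mathbb{P}(X_s\ge t)\le e^{-(2-\delta_0)t/\log s}$ for \emph{all} $t>T$, but your first step only provides this for $t\le a_2\log^2 s$ (the upper bound is uniform only over compact windows $C\in[c_1,c_2]$); the range $t>a_2\log^2 s$ needs a separate crude estimate, e.g.\ Cauchy--Schwarz, $\e\bigl[\binom{X_s}{k}1_{\{X_s>a_2\log^2 s\}}\bigr]\le\bigl(\binom{2k}{k}\e\binom{X_s}{2k}\,\mathbb{P}(X_s>a_2\log^2 s)\bigr)^{1/2}$ with $a_2$ large (legitimate since the hypothesis window of admissible $k$ can be taken arbitrarily wide), or a Markov bound at an order much larger than $k$; with that one-line fix the argument goes through.
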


\begin{proof}[Proof of Lemma \ref{lem1.6}]
    For any set $A=\{x_1,x_2,\dots ,x_k\}\subset \{1,2,\dots ,s-1\}$, suppose $x_1<x_2<\dots<x_k$ and $k\le s-1$. 
    Then $A\subset \mathcal{G}_s$ if and only if $G_{n_1}>x_i$ for any $1\leq i\leq k-1$ and $\tau^G_{x_i}+1\leq {n_1}\leq \tau^G_{x_{i+1}}$, and $G_{n_2}>x_k$ for any $\tau^G_{x_k}+1\leq n_2\leq \tau^G_{s}$. 
    Similar to our previous techniques \cite[(4.7)]{FH25}, we see that $(\frac{1}{G_n})_{n\ge1}$ is martingale. By applying the Optional Stopping theorem at $\tau^G_{a}\wedge\tau^G_{b}$, with the initial state $\frac{1}{G_1}=\frac{1}{a+1}$. One has,
    \begin{equation}\label{mart-anal}
        \p\left(G_n>a,\ \mbox{ for any } \tau^G_{a}+1\leq n\leq \tau^G_{b}\,\Big|\, \mathcal{F}^{G}_{\tau^G_{a}}\right)=\frac{b}{2a(b-a)}.
    \end{equation}
    Then by Markov property,
    \begin{align}
        \p(A\subset \mathcal{G}_s)=\,\;&\prod_{i=1}^{k-1}\p\left(G_y>x_i, \mbox{ for any }  \tau^G_{x_i}+1\leq y\leq \tau^G_{x_{i+1}}\,\Big|\, \mathcal{F}^{G}_{\tau^G_{x_i}}\right)\notag\\
        &\cdot \p\left(G_y>x_k, \mbox{ for any } \tau^G_{x_k}+1\leq y\leq \tau^G_s\,\Big|\, \mathcal{F}^{G}_{x_k}\right)\notag\\
        =\,\;&\prod_{i=1}^{k-1}\frac{x_{i+1}}{2x_i(x_{i+1}-x_i)}\cdot \frac{s}{2x_k(s-x_k)}=2^{-k}\prod_{i=1}^{k-1}\frac{1}{x_{i+1}-x_i}\cdot \frac{s}{x_1(s-x_k)}.\label{eq:1.11}
    \end{align}
    Note that $s\in \mathcal{G}_s$. Let A by a $k$-element subsets of $\{1,2\dots s-1\}$, Take $a_1=x_1,a_2=x_2-x_1,...,a_k=x_k-x_{k-1},a_{k+1}=s-x_k$, summing up \eqref{eq:1.11} gives
    \begin{align*}
        \e\left[\binom{g(s)-1}{k}\right]=\,\;&2^{-k}\sum_{1\leq x_1<x_2<\dots<x_{k}\leq s-1}\prod_{i=1}^{k-1}\frac{1}{x_{i+1}-x_i}\cdot \frac{s}{x_1(x-x_k)}\\
        =\,\;&2^{-k}\sum_{\substack{a_1+a_2+\dots+a_{k+1}=s\\ 1\le a_i\le s,\ \forall i\in[1,k+1]}}\frac{a_1+a_2+\dots+a_{k+1}}{a_1a_2\dots a_{k+1}}\\
        =\,\;&2^{-k}\sum_{j=1}^{k+1}\sum_{\substack{a_1+a_2+\dots+a_{k+1}=s\\ 1\le a_i\le s,\ \forall i\in[1,k+1]}}\dfrac{1}{\prod\limits_{i\neq j}a_i}\\
        =\,\;&2^{-k}(k+1)\sum_{\substack{a_1+a_2+\dots+a_{k}\leq s-1\\ 1\le a_i\le s,\ \forall i\in[1,k]}}\frac{1}{a_1 a_2\dots a_{k}}.
    \end{align*}
    Note that 
    \begin{align*}
        &\sum_{a_1+a_2+\dots+a_{k}\leq s-1}\frac{1}{a_1 a_2\dots a_{k}}\leq \left(\sum_{a\leq s-1}\frac{1}{a}\right)^k\leq (1+\log s)^k;\\
        &\sum_{a_1+a_2+\dots+a_{k}\leq s-1}\frac{1}{a_1 a_2\dots a_{k}}\geq \left(\sum_{a\leq \frac{s-1}{k}}\frac{1}{a}\right)^k\geq (\log s-\log k)^k,
    \end{align*}
    one has
    \begin{align*}
        &\e\left[\binom{g(s)-1}{k}\right]\leq 2^{-k}(k+1)(1+\log s)^{k};\\
        &\e\left[\binom{g(s)-1}{k}\right]\geq 2^{-k}(k+1)(\log s-\log k)^{k}.
    \end{align*}
    Then for $k\in [c_1\log s,c_2\log s]$, we have
    \begin{align*}
        &\left(\frac{(1-\epsilon)\log s}{2}\right)^k\leq \e\left[\binom{g(s)}{k}\right]
        =\e\left[\binom{g(s)-1}{k}\right]+\e\left[\binom{g(s)-1}{k-1}\right]\leq \left(\frac{(1+\epsilon)\log s}{2}\right)^k.
    \end{align*}
\end{proof}
Having completed the proof of Lemma \ref{lem1.6}, we now turn to Lemma \ref{lem1.7}.
\begin{proof}[Proof of Lemma \ref{lem1.7}]
    Take $k_1=\lfloor 2C\log s\rfloor$ and $\epsilon_0$ is a small constant. By Markov's inequality,
    \begin{align}
        \p(X_s\geq C\log^2 s)&\leq \binom{\lceil C\log^2 s\rceil}{k_1}^{-1}\e \left( \binom{X_s}{k_1}\right)\notag\\
        &\leq \binom{\lceil C\log^2 s\rceil}{k_1}^{-1}\left(\frac{(1+\epsilon_0)\log s}{2}\right)^{k_1}.\label{eq:1.12}
    \end{align}
    By Stirling's formula,
    \begin{align*}
        \text{RHS~of~\eqref{eq:1.12}}\leq o(\log s)\left(\frac{1+\epsilon_0}{\text{e}}\right)^{2C\log s}\leq e^{-(2-\epsilon)C\log s}
    \end{align*}
    for $\epsilon_0<\epsilon$.
    
    For the lower bound, note that for any $0<\epsilon_0<\frac{\min\{1,C\}}{10}$, taking $\epsilon_1<\frac{\epsilon_0^2}{12C(C+\epsilon_0)}$, for sufficiently large $s$, we have
    \begin{align}\label{LDP01}
        &\e \left[ \binom{X_s}{k_1}1_{\{X_s\geq (C+\epsilon_0)\log^2 s\}}\right]
        \le\sum_{r\ge (C+\epsilon_0)\log^2 s} \binom{r}{k_1}\cdot\p(X_s=r)\nonumber\\
        \le\,\;&\binom{\lceil(C+\epsilon_0)\log^2 s\rceil}{\lfloor 2C\log s\rfloor}\binom{\lceil(C+\epsilon_0)\log^2 s\rceil}{\lfloor 2(C+\epsilon_0)\log s\rfloor}^{-1}\sum_{r\ge (C+\epsilon_0)\log^2 s}\binom{r}{\lfloor2(C+\epsilon_0)\log s\rfloor}\p(X_s=r)\nonumber\\
        \leq\,\;& \exp{\left(-2\epsilon_0\log s\cdot\log\left(\frac{\log s}{2}\right)-\frac{\epsilon_0^2\log s}{2(C+\epsilon_0)}+O(\log\log s)\right)}\cdot\e \left( \binom{X_s}{\lfloor2(C+\epsilon_0)\log s\rfloor}\right)\nonumber\\
        \leq\,\;& \exp \left(6C\epsilon_1\log s- \frac{\epsilon_0^2\log s}{2(C+\epsilon_0)}+O(\log\log s)\right)\left(\frac{(1-\epsilon_1)\log s}{2}\right)^{k_1}\nonumber\\
        =&\,\;o(1)\left(\frac{(1-\epsilon_1)\log s}{2}\right)^{k_1}.
    \end{align}
    And similarly,
    \begin{align}\label{LDP02}
        &\e \left[ \binom{X_s}{k_1}1_{\{X_s\leq (C-\epsilon_0)\log^2 s\}}\right]
        =o(1)\left(\frac{(1-\epsilon_1)\log s}{2}\right)^{k_1}.
    \end{align}
    Then combining \eqref{LDP01} and \eqref{LDP02}, using Stirling's formula again, we get
    \begin{align*}
        &\p\left(X_s\geq (C-\epsilon_0)\log^2 s\right)
        \ge\sum_{r=\lceil (C-\epsilon_0)\log^2 s\rceil}^{\lfloor (C+\epsilon_0)\log^2 s\rfloor}\frac{\binom{r}{k_1}}{\binom{\lfloor (C+\epsilon_0)\log^2 s\rfloor}{k_1}}\cdot\p\left(X_s=r\right)\\
        =\,\;&\binom{\lfloor (C+\epsilon_0)\log^2 s\rfloor}{k_1}^{-1}\cdot\e \left[ \binom{X_s}{k_1}1_{\{(C-\epsilon_0)\log^2 s\leq X_s \leq (C+\epsilon_0)\log^2 s\}}\right]\\
        \geq\,\; &(1-o(1))\binom{\lfloor (C+\epsilon_0)\log^2 s\rfloor}{k_1}^{-1}\left(\frac{(1-\epsilon_1)\log s}{2}\right)^{k_1}\\
        \geq\,\;&\exp(-2C\log s-2\epsilon_0\log s-4C\epsilon_1\log s)\geq \exp\left({-(2+\epsilon)C\log s}\right)
    \end{align*}
    for $\epsilon_0$ small enough. 
\end{proof}

\subsection{The Lower Bound}\label{se:3}
We now prove the lower bound in Proposition \ref{prop 1.3}. Specifically,
\begin{proposition}\label{prop3.1}
\begin{equation}\label{lower-bound}
    \liminf_{M\to\infty} \frac{\log\left(\p\left(\max\limits_{1\leq m\leq \sigma-1}\curvevisitonenumber{m}\geq M\right)\right)}{2\sqrt{M}}\geq -1.
\end{equation}  
\end{proposition}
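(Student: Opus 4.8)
The plan is to reduce the estimate to a statement about the $h$-process $(G_n)$ and then to remove a spurious constant factor by iteration.

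First I would set up the reduction. Recall $(T_n)$ is simple random walk from $T_1=1$ killed at $\sigma=\inf\{n:T_n=0\}$, and let $\tau_r=\inf\{n:T_n=r\}$. By the elementary $h$-transform identity (the function $x\mapsto x$ is harmonic for the walk killed at $0$), conditionally on $\{\tau_s<\sigma\}$ the path $(T_n)_{1\le n\le\tau_s}$ has the law of $(G_n)_{1\le n\le\tau^G_s}$, while $\mathbb P(\tau_s<\sigma)=1/s$. Moreover, for $m\le\sigma-1$ the walk stays in $\{1,2,\dots\}$, so every once-visited site of $(T_i)_{1\le i\le m}$ is reached from below and never revisited before time $m$; hence, writing $g^T(r):=\curvevisitonenumber{\tau_r}$ and $M_\sigma=\max_{n\le\sigma}T_n$, one has $\max_{1\le m\le\sigma-1}\curvevisitonenumber{m}=\max_{1\le r\le M_\sigma}g^T(r)$ (at any time one sees at most the once-visited sites that were present when the current running maximum was first reached). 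Combining these gives, for every $s$,
\[
\mathbb P\!\left(\max_{1\le m\le\sigma-1}\curvevisitonenumber{m}\ge M\right)\ \ge\ \frac1s\,\mathbb P\!\left(\max_{1\le r\le s}g(r)\ge M\right)\ \ge\ \frac1s\,\mathbb P(g(s)\ge M),
\]
with $g(r)$ the once-visited count of $(G_n)$ at $\tau^G_r$ as in Proposition~\ref{prop1.5}.

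If one simply inserts Proposition~\ref{prop1.5} here with $M=C\log^2 s$, one gets the bound $\exp(-(1+(2+\epsilon)C)\log s)$ with $\log s=\sqrt{M/C}$; minimizing $(1+(2+\epsilon)C)/(2\sqrt C)$ over $C$ gives $\sqrt{2+\epsilon}$ (attained at $C=1/(2+\epsilon)$), hence only $\liminf_M (2\sqrt M)^{-1}\log\mathbb P(\max_m\curvevisitonenumber{m}\ge M)\ge-\sqrt2$, which is weaker than the claimed $-1$. The loss is structural: at a single scale one pays separately for the walk reaching level $s$ (a factor $1/s$) and for the $h$-process being anomalously monotone inside $[1,s]$ (a factor $s^{-2C}$), and the two are not jointly optimal.

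To recover the missing factor $\sqrt2$ I would run the self-boosting iteration. Suppose it is already known that $q(M):=\mathbb P(\max_{1\le m\le\sigma-1}\curvevisitonenumber{m}\ge M)\ge\exp(-(\kappa+o(1))\sqrt M)$ for all large $M$, the base case $\kappa=2\sqrt2$ being the previous paragraph. In the improvement step one runs the $h$-process to a carefully chosen intermediate level $s=s(M,\kappa)$ and splits its trajectory, along the ladder epochs $\tau^G_1<\tau^G_2<\cdots$, into the monotone climbing pieces — whose once-visited yield is controlled by the binomial-moment estimate of Lemma~\ref{lem1.6}, exactly as in the proof of Proposition~\ref{prop1.5} — and the intervening down-excursions, each of which is, after a shift and time-reversal, a copy of a $(T_n)$-type excursion to which the inductive estimate $q(\cdot)$ applies; feeding $q(\cdot)$ back in, one lower bounds the probability that these pieces jointly leave $M$ surviving once-visited sites. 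The strong Markov property makes the successive pieces independent, and since they live at disjoint height ranges their surviving contributions add up, so one obtains $q(M)\ge\exp(-(f(\kappa)+o(1))\sqrt M)$ with $f(\kappa)<\kappa$ for $\kappa>2$; the intermediate scale is chosen so that the recursion $\kappa\mapsto f(\kappa)$ has $2$ as its unique attracting fixed point. Iterating then forces $\kappa\downarrow2$, which is exactly \eqref{lower-bound}.

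I expect the improvement step to be the main obstacle. Three things must be controlled: (i) the recursion $\kappa\mapsto f(\kappa)$ has to be engineered so that its fixed point is precisely $2$ and not some larger constant, which means the exponents of Lemma~\ref{lem1.6} and the factor $1/s$ from the reduction must be tracked with no slack; (ii) the once-visited sites produced at lower stages must be protected from being overwritten later, which forces a barrier-type restriction whose cost has to be shown to be only $\exp(o(\sqrt M))$ so that it is absorbed into the error term; and (iii) the maximum over $m$ — equivalently over levels $r\le M_\sigma$ via the identity above — must be carried through every stage of the iteration. The delicate point, I believe, is (i).
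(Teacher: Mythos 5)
Your opening reduction is sound: the $h$-transform identity, the observation that the maximum of $d(m)$ is attained at the hitting times $\tau_r=\inf\{n:T_n=r\}$ of new maxima, and the resulting single-scale bound $-\sqrt{2}$ are all correct, and your instinct that the constant must then be improved by a self-referential scheme matches the paper's philosophy. The gap is that the improvement step --- which is the entire content of the proof --- is not carried out, and the mechanism you sketch for it cannot work. You propose to gain by splitting the $h$-process into ladder climbs (controlled by Lemma \ref{lem1.6}) and the intervening down-excursions, to which the inductive estimate $q(\cdot)$ is to be applied. But the very observation you used to prove your maximum-at-ladder-times identity defeats this: by time $\tau^G_r$ every site of $[1,r]$ has already been visited, so an excursion of $G$ below its running maximum visits only previously visited sites; it can destroy once-visited sites but never create any. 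Hence ``feeding $q(\cdot)$ back in'' to those down-excursions leaves no surviving once-visited sites, no recursion $\kappa\mapsto f(\kappa)$ with $f(\kappa)<\kappa$ comes out of this decomposition, and you yourself defer the one point (engineering the fixed point to be exactly $2$) where the real work lies.

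For comparison, the paper's boost comes from a different place. After the unconditioned excursion $(T_n)$ has reached a level $K_M\approx e^{2(-\Lambda-\epsilon_0)\sqrt{M}}$ (cost $K_M^{-1}$; here $\Lambda$ is the liminf itself, made finite by a crude a priori bound such as your $-\sqrt2$), one uses the $\sim K_M/\log^2K_M$ i.i.d.\ fresh excursions of $T$ \emph{above its running maximum} (the pieces between the stopping times $\beta_i,\gamma_i$), each distributed exactly as a full $T$-excursion. Each realizes $c_3M$ once-visited sites with probability at least $\exp(-2(-\Lambda+\epsilon_1)\sqrt{c_3M})\gg K_M^{-1}$, so with probability $1-o(1)$ at least one succeeds: the expensive sub-event becomes essentially free. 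The conditioned climb to $K_M$ is asked to supply only the residual $(1-c_3)M=O(\epsilon_0)M$ once-visited sites, restricted to sites below $K_M/2$ (Lemma \ref{lem3.2}) so that the barrier event $\{\beta_{\rm succ}<\beta_{\rm fail}\}$, of probability at least $1/3$, protects them; this costs only $\exp(-O(\epsilon_0)\sqrt{M})$. Multiplying the three factors yields the single self-referential inequality $\Lambda\ge\Lambda+\epsilon_0-\epsilon_0/(\Lambda(\Lambda+\epsilon_0))$ for all small $\epsilon_0$, which forces $\Lambda^2\le 1$, i.e.\ $\Lambda\ge-1$; no explicit iteration of a map $f$ is needed. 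To repair your proposal you would have to relocate the production of the bulk of the once-visited sites to fresh excursions above the running maximum, where exponentially many independent attempts are available, and then actually verify the resulting quantitative self-consistency.
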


The core idea of this subsection is a multi-stage construction.
It is worth noting that the proof is built with a bootstrapping, iterative framework designed for analyzing the maximum number of once visited sites at each excursion. 
We also remark that although the desired lower bound is in principle also achievable via describing a non-iterative explicit tilting strategy. Such approach is significantly more intricate than ours.

To prepare for the proof, we begin by introducing the following notation.
Let
\begin{equation*}
    \tilde{\mathcal{G}}_s:=\left\{ y \leq \frac{s}{2}: \exists \ k,\ 1 \leq k \leq \tau^G_s, \text{ such that } G_k = y, \text{ and } G_l \neq y \text{ for } 1 \leq l \leq \tau^G_s,\ l \neq k \right\}
\end{equation*}
stand for the set of once visited sites of $G$ smaller than the site $\frac{s}{2}$ at time $\tau^G_s$. Let $\tilde g(s)=\#\tilde{\mathcal{G}}_s$
and
\begin{equation}\label{Lambda}
\Lambda:=\liminf_{M\to\infty} \frac{\log(\p(\max\limits_{1\leq m\leq \sigma-1}\curvevisitonenumber{m}\geq M))}{2\sqrt{M}}.
\end{equation}
We will first prove that the behavior of $\tilde{\mathcal{G}}_s$ is similar to $\mathcal{G}_s$ with the following lemma.
\begin{lemma}\label{lem3.2}
For any $0<\epsilon<1$, $0<c_1<c_2$, there exists $N=N(\epsilon,c_1,c_2)>0$ such that for any $s\geq N$, $k\in [c_1\log s,c_2\log s]$, one has
\begin{equation*}
    \left(\frac{(1-\epsilon)\log s}{2}\right)^k\leq \e \left( \binom{\tilde{g}(s)}{k}\right)\leq \left(\frac{(1+\epsilon)\log s}{2}\right)^k.
\end{equation*}
\end{lemma}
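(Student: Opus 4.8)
\textbf{Proof plan for Lemma~\ref{lem3.2}.} The plan is to repeat the combinatorial argument behind Lemma~\ref{lem1.6}, tracking the one extra constraint ``$y\le s/2$'' and the fact that the endpoint $s$ no longer lies in the once-visited set. Since $(G_n)$ is conditioned never to hit $0$ and starts at $G_1=1$, one has $G_n\ge 1$ for every $n$, so $\tilde{\mathcal G}_s\subseteq\{1,2,\dots,\lfloor s/2\rfloor\}$. Hence, writing $\binom{\tilde g(s)}{k}=\sum_A \mathbf 1_{\{A\subseteq\tilde{\mathcal G}_s\}}$ over $k$-element subsets $A$ of $\{1,\dots,\lfloor s/2\rfloor\}$ and taking expectations, I would start from
\begin{equation*}
\e\left[\binom{\tilde g(s)}{k}\right]=\sum_{\substack{A=\{x_1<\dots<x_k\}\subseteq\{1,\dots,\lfloor s/2\rfloor\}}}\p\left(A\subseteq\tilde{\mathcal G}_s\right).
\end{equation*}
For such an $A$ every element already satisfies $x_i\le s/2$, so the event $\{A\subseteq\tilde{\mathcal G}_s\}$ coincides with $\{A\subseteq\mathcal G_s\}$, and the exact formula \eqref{eq:1.11} may be reused verbatim:
\begin{equation*}
\p\left(A\subseteq\tilde{\mathcal G}_s\right)=2^{-k}\prod_{i=1}^{k-1}\frac{1}{x_{i+1}-x_i}\cdot\frac{s}{x_1(s-x_k)}.
\end{equation*}
In contrast to Lemma~\ref{lem1.6}, the site $s$ is not in $\tilde{\mathcal G}_s$, so there is no Pascal-type correction term $\binom{\cdot}{k-1}$; this actually simplifies the bookkeeping.

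Next I would pass to the gap variables $a_1=x_1$ and $a_j=x_j-x_{j-1}$ for $2\le j\le k$. The constraint $x_k\le\lfloor s/2\rfloor$ becomes $a_1+\dots+a_k\le\lfloor s/2\rfloor$, which in particular forces $s-x_k\ge s/2$, hence $1\le \frac{s}{s-x_k}\le 2$. Setting $\Sigma_k:=\sum_{a_1+\dots+a_k\le\lfloor s/2\rfloor,\ a_i\ge1}\frac{1}{a_1\cdots a_k}$, the previous two displays give
\begin{equation*}
2^{-k}\,\Sigma_k\ \le\ \e\left[\binom{\tilde g(s)}{k}\right]\ \le\ 2^{1-k}\,\Sigma_k .
\end{equation*}
Then I would bound $\Sigma_k$ exactly as in the proof of Lemma~\ref{lem1.6}: from above, $\Sigma_k\le\bigl(\sum_{a\le s/2}\tfrac1a\bigr)^k\le(1+\log s)^k$; from below, $\Sigma_k\ge\bigl(\sum_{a\le\lfloor s/2\rfloor/k}\tfrac1a\bigr)^k\ge\bigl(\log(\lfloor s/2\rfloor/k)\bigr)^k$.

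Finally I would invoke the hypothesis $k\in[c_1\log s,c_2\log s]$ to absorb the error terms. Since $\log k=O(\log\log s)=o(\log s)$, for $s$ large enough $\log(\lfloor s/2\rfloor/k)\ge(1-\epsilon)\log s$, giving the lower bound $\e[\binom{\tilde g(s)}{k}]\ge 2^{-k}((1-\epsilon)\log s)^k=\bigl(\tfrac{(1-\epsilon)\log s}{2}\bigr)^k$. For the upper bound, choosing $\epsilon_1=\epsilon/2$ we have $1+\log s\le(1+\epsilon_1)\log s$ eventually, so $\e[\binom{\tilde g(s)}{k}]\le 2\cdot\bigl(\tfrac{(1+\epsilon_1)\log s}{2}\bigr)^k$; and since $k\ge c_1\log s\to\infty$ and $\tfrac{1+\epsilon}{1+\epsilon_1}>1$, we get $2\le\bigl(\tfrac{1+\epsilon}{1+\epsilon_1}\bigr)^k$ for $s$ large, so this is at most $\bigl(\tfrac{(1+\epsilon)\log s}{2}\bigr)^k$, as required. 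The argument presents no genuine obstacle; the only points needing a little care are the identification $\{A\subseteq\tilde{\mathcal G}_s\}=\{A\subseteq\mathcal G_s\}$ for $A$ lying below $s/2$ (so that \eqref{eq:1.11} transfers directly) and the remark that the stray factor $2$ from $\frac{s}{s-x_k}\le 2$, together with the $O(\log\log s)$ corrections, is harmless precisely because the exponent $k$ grows proportionally to $\log s$.
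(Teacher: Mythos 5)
Your proposal is correct and follows essentially the same route as the paper: both sum the exact formula \eqref{eq:1.11} over $k$-element subsets of $\{1,\dots,\lfloor s/2\rfloor\}$, pass to the gap variables, and use $k\asymp\log s$ to absorb the harmless constant factors. The only cosmetic difference is the upper bound, which the paper obtains in one line from the monotonicity $\tilde g(s)\le g(s)$ together with Lemma \ref{lem1.6}, whereas you re-run the computation using $1\le \frac{s}{s-x_k}\le 2$; both are fine.
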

This lemma is exactly the same as Lemma \ref{lem1.6}, except that $g(s)$ is replaced by $\tilde{g}(s)$. The proof is similar as well.
\begin{proof}
     The upper bound can be verified immediately from Lemma \ref{lem1.6} since $\tilde{g}(s)\leq g(s)$. For the lower bound, 
     running over all $k$-element subsets of $\{1,2\dots \lfloor \frac{s}{2}\rfloor\}$, and summing up \eqref{eq:1.11} gives
     \begin{align*}
          \e\left(\binom{\tilde{g}(s)}{k}\right)=&2^{-k}\sum_{1\leq x_1<x_2<\dots<x_{k}\leq \lfloor \frac{s}{2}\rfloor}\prod_{i=1}^{k-1}\frac{1}{x_{i+1}-x_i}\cdot \frac{s}{x_1(s-x_k)}\\
          \geq&2^{-k} \sum_{1\leq x_1<x_2<\dots<x_{k}\leq \lfloor \frac{s}{2}\rfloor}\prod_{i=1}^{k-1}\frac{1}{x_{i+1}-x_i}\cdot \frac{1}{x_1}\\
         =&2^{-k}\sum_{a_1+a_2+\dots +a_k\leq \lfloor \frac{s}{2}\rfloor}\frac{1}{a_1a_2\dots a_k}\geq 2^{-k}\left(\log s-\log(2k)\right)^k\\
         \geq &\left(\frac{(1-\epsilon)\log s}{2}\right)^k
     \end{align*}
     for $k\in [c_1\log s,c_2\log s]$.
\end{proof}
\begin{proof}[Proof of Proposition \ref{prop3.1}]
Combining Lemma \ref{lem3.2} and Lemma \ref{lem1.7}, for any $\epsilon>0$, $0<c_1<c_2$, there exists $N=N(\epsilon,c_1,c_2)>0$ such that for any $s\geq N$, $C\in [c_1,c_2]$, we have
    \begin{equation}\label{eq:3.0}
       \exp\left({-(2+\epsilon)C\log s}\right)\leq \p\left(\tilde{g}(s)\geq C\log^2 s\right)\leq \exp\left({-(2-\epsilon)C\log s}\right).
    \end{equation}
Choose $0<\epsilon_0<0.1$ and let $K_M=\lfloor \text{e}^{2(-\Lambda-\epsilon_0)\sqrt{M}}\rfloor$. Define the stopping times
\begin{align*}
    &\beta_0=\gamma_0=\inf\{n:T_n=K_M\};\\
    &\beta_i=\inf\{n>\gamma_{i-1}:T_n=\max_{1\leq k\leq \gamma_{i-1}}T_k+1\};\\
    &\gamma_i=\inf\{n>\gamma_{i-1}:T_n=T_{\beta_i}-1\};\\
    &\beta_{\rm fail}=\inf\{n>\beta_0:T_n\leq\frac{K_M}{2}\};\\
    &\beta_{\rm succ}=\inf\{n>\beta_0:T_n= 2K_M\}.
\end{align*}
Furthermore, the process $(T_{n+\beta_i-1} - T_{\beta_i-1})_{1 \le n \le \gamma_i - \beta_i + 1}$ is a simple random walk starting at $1$ and stopping upon hitting $0$. This implies that $(T_{n+\beta_i-1})_{0 \le n \le \gamma_i - \beta_i + 1}$ has the same distribution as $(T_n)_{0 \le n \le \sigma}$.

Now, define $Y_i = \max\limits_{1 \le n \le \gamma_i} T_n - \max\limits_{1 \le n \le \gamma_{i-1}} T_n$. Then, on the event ${\beta_i < \infty}$, the distribution of $Y_i$ is identical to that of $X_i$ in \eqref{eq:logvariable} and is independent of $\mathcal{F}_{\beta_i}^T$.
Let $l = \lfloor K_M / \log^2 K_M \rfloor$, and define the events $A_1$ and $A_2$ by
\begin{align*}
    A_1:=\{\beta_{\rm succ}<\beta_{\rm fail}\},\quad\mbox{and}\quad
    A_2:=\{\gamma_{l}< \beta_{\rm succ}\}.
\end{align*}
Then on $\beta_0<\infty$,
\begin{align*}
    &\p(A_1\mid \mathcal{F}^T_{\beta_0})\geq \frac{1}{3};\\
    &\p(A_2^c\mid \mathcal{F}^T_{\beta_0})=\p\left(\sum_{i=1}^{l}X_i\ge K_M\right)\leq l\cdot\p(X_i\geq K_M)+\frac{l\cdot\e(X_i\cdot1_{X_i<K_M})}{K_M}=O(\log^{-1}(K_M)).
\end{align*}
Define $Z_i$ as the maximum number of sites that are visited exactly once by the sequence $(T_k)_{1\le k\le \gamma_i-\beta_i+1}$. That is,
\begin{align*}
    &Z_i=\max_{1\leq m\leq \gamma_i-\beta_i+1}\\
    &\#\left\{s:\exists\ k\in[1,m] \text{ such that } T_{k+\beta_i-1} = s, \text{ and } T_{j+\beta_i-1} \neq s \text{ for } 1 \leq j \leq m,\ j \neq k \right\}.
\end{align*}
Then, $\{Z_i\}_{i\ge1}$ is an i.i.d.~sequence, with each $Z_i$ distributed as $\max\limits_{1\leq m\leq \sigma-1}d(m)$. 
Hence, for any constants $0<c<1$, and $0<\epsilon_1<0.1$, by \eqref{Lambda}, for sufficiently large $M$,
\begin{equation*}
    \p(Z_i\geq cM)\geq \exp\left({-2(-\Lambda+\epsilon_1)\sqrt{cM}}\right).
\end{equation*}
Take $c_3=1+\frac{2(\epsilon_0+\epsilon_1)}{\Lambda}$ and recall that $0<\epsilon_0<\frac{\min\{1,C\}}{10}$, then
\begin{equation*}
    \p(Z_i\geq c_3 M)\geq \exp\left({-2\left(-\Lambda-\epsilon_0+\frac{\epsilon_1^2}{\Lambda}\right)\sqrt{M}}\right)>K_M^{-1+\frac{\epsilon_1^2}{\Lambda^2}}.
\end{equation*}
Since $\{Z_i\}_{i\ge1}$ are i.i.d.~random variables, $\p(\max_{1\leq i\leq l}Z_i\geq cM)=1-o(1)$. Combining the above estimates, we obtain
\begin{equation}\label{eq:3.2}
    \p\left(\ \exists\ i\geq 1,\ \gamma_i<\beta_{\rm fail},\ Z_i\geq c_3M\ \big|\ \mathcal{F}_{\beta_0}^T\right)\geq \p(A_i)-\p(A_2^c)-o(1)\geq \frac{1}{4}.
\end{equation}
Note that 
\begin{equation}\label{eq:3.3}
    \p(\beta_0<\infty)=K_M^{-1}.
\end{equation}
Conditioned on $\beta_0<\infty$, the process $(T_n)_{1\leq n\leq \beta_0}$ is a simple random walk starting at $1$, which is conditioned never to hit $0$ and stopped upon hitting $K_M$. Denote
\begin{equation*}
    \tilde{\mathcal{D}}_{K_M}:=\left\{1\leq s\leq \frac{K_M}{2}:\exists \ 1\leq k\leq \beta_0, \text{ such that } T_{k} = s, \text{ and } T_{l} \neq s \text{ for } 1 \leq l \leq \beta_0,\ l \neq k \right\}.
\end{equation*}
By \eqref{eq:3.0}, for any $\epsilon>0$,
\begin{equation}\label{eq:3.4}
    \p\left(\left.\#\tilde{\mathcal{D}}_{K_M}\geq \left(1-c_3\right)M\right|\beta_0<\infty \right)
    =\p\left(\tilde{g}(K_M)\geq \left(1-c_3\right)M\right) \geq\text{e}^{-(2+\epsilon)\left(\frac{\epsilon_0+\epsilon_1}{\Lambda(\Lambda+\epsilon_0)}\right)\sqrt{M}}.
\end{equation}
For $\beta_i\leq n\leq \gamma_i$ satisfies $\gamma_i<\beta_{\rm fail}$, define
\begin{equation*}
    \widehat{\mathcal{D}}_{n}:=\left\{x:\exists\ k,\ \beta_i\leq k\leq n \text{ such that } T_{k} = x, \text{ and } T_{l} \neq x \text{ for } \beta_i \leq l \leq n,\ l \neq k \right\}.
\end{equation*}
Then for $\beta_i\leq n_1\leq \gamma_i$ satisfies $\gamma_i<\beta_{\rm fail}$, 
\begin{equation}\label{eq:3.5}
    \tilde{\mathcal{D}}_{K_M}\cup  \widehat{\mathcal{D}}_{n_1}\subset\mathcal{D}_{n_1}.
\end{equation}
Combining \eqref{eq:3.2}, \eqref{eq:3.3}, \eqref{eq:3.4} and \eqref{eq:3.5} gives
\begin{align*}
    &\p\left(\max_{1\leq m\leq \sigma-1}\curvevisitonenumber{m}\geq M\right)\\
    \geq& \;\,\p(\beta_0<\infty)\cdot\p\left(\left.\#\tilde{\mathcal{D}}_{K_M}\geq \left(1-c_3\right)M\right|\beta_0<\infty \right)
    \cdot\p\left(\left.\exists\ i\geq 1,\ \gamma_i<\beta_{\rm fail},\ Z_i\geq c_3 M\right|\mathcal{F}_{\beta_0}\right)\\
    \geq& \;\,\exp\left({-\left(-2(\Lambda+\epsilon_0)+(2+\epsilon)\left(\frac{\epsilon_0+\epsilon_1}{\Lambda(\Lambda+\epsilon_0)}\right)\right)\sqrt{M}+O(1)}\right).
\end{align*}
Taking $\epsilon_1$ and $\epsilon$ close to $0$,
we obtain
\begin{align*}
    \Lambda\geq \Lambda+\epsilon_0-\frac{\epsilon_0}{\Lambda(\Lambda+\epsilon_0)}
\end{align*}
for any $\epsilon_0>0$, which gives $\Lambda\geq -1$ and completes the proof of \eqref{lower-bound}.
\end{proof}

\subsection{The Upper Bound}\label{se:4}
In this subsection, we prove the upper bound of Proposition \ref{prop 1.3}, namely:
\begin{proposition}\label{prop4.1}
\begin{equation*}
    \limsup_{M\to\infty} \frac{\log\left(\p\left(\max\limits_{1\leq m\leq \sigma-1}\curvevisitonenumber{m}\geq M\right)\right)}{2\sqrt{M}}= -1.
\end{equation*}  
\end{proposition}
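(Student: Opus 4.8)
Since Proposition~\ref{prop3.1} already yields $\liminf_{M\to\infty}\frac{\log\p(\max_{1\le m\le\sigma-1}d(m)\ge M)}{2\sqrt M}\ge-1$, and hence the same bound for the $\limsup$, it suffices to prove the matching upper bound, i.e. $\p(\max_{1\le m\le\sigma-1}d(m)\ge M)\le e^{-(2-o(1))\sqrt M}$. The starting point is a reduction to the $h$-process $(G_n)$. Put $R:=\max_{1\le s\le\sigma}T_s$. Once $(T_n)$ has reached its overall maximum $R$ it stays in $[1,R]$, all of which has already been explored, so no new site is ever visited afterwards and the number of once-visited sites is non-increasing from time $\tau^T_R$ on; thus $\max_{1\le m\le\sigma-1}d(m)=\max_{1\le m\le\tau^T_R}d(m)$. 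Conditionally on $\{R=k\}$ the path $(T_n)_{1\le n\le\tau^T_k}$ is a simple random walk from $1$ conditioned to reach $k$ before $0$, and a standard $h$-transform computation of path probabilities shows that this coincides in law with $(G_n)_{1\le n\le\tau^G_k}$. Writing $Q(k,M):=\p\big(\max_{1\le\rho\le k}g(\rho)\ge M\big)$, which is non-decreasing in $k$, and recalling $\p(R\ge k)=1/k$ from \eqref{T-distribution}, we get
$$\p\Big(\max_{1\le m\le\sigma-1}d(m)\ge M\Big)=\sum_{k\ge1}\p(R=k)\,Q(k,M)\le\frac1{k_1}+Q(k_1,M)\qquad\text{for every }k_1.$$
The first term is of the desired order as soon as $k_1\approx e^{2\sqrt M}$; the entire difficulty is to show that $Q(k_1,M)$ is no larger for a suitable $k_1$.

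The plan for $Q(k_1,M)$ is a self-boosting iteration dual to the one in Section~\ref{se:3}. Let $\Lambda':=\limsup_{M\to\infty}\frac{\log\p(\max_{1\le m\le\sigma-1}d(m)\ge M)}{2\sqrt M}\in[-1,0]$, fix small $\epsilon_0,\epsilon_1>0$, and choose $k_1=\lceil e^{2(-\Lambda'-\epsilon_0)\sqrt M}\rceil$ together with an intermediate level $K=\lceil e^{2(-\Lambda'-2\epsilon_0)\sqrt M}\rceil$, so that $K$ is exponentially large while $\log(k_1/K)=2\epsilon_0\sqrt M$. Run $(G_n)$ up to $\tau^G_{k_1}$. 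At the first-passage time $\tau^G_{\rho^*}$ realizing $\max_{\rho\le k_1}g(\rho)$, the once-visited sites split into those in $[1,K]$ --- at most $g(K)$ of them, since a site of $[1,K]$ visited exactly once by time $\tau^G_{\rho^*}\ge\tau^G_K$ is a fortiori visited exactly once by time $\tau^G_K$ --- and those in $(K,\rho^*]$, whose number is at most
$$H:=\max_{K<\rho\le k_1}\#\{\,s\in(K,\rho]:\ (G_n)\text{ visits }s\text{ exactly once up to time }\tau^G_\rho\,\}.$$
Hence $\{\max_{\rho\le k_1}g(\rho)\ge M\}\subseteq\{g(K)\ge(1-c)M\}\cup\{H\ge cM\}$ for any $c\in(0,1)$, and $c$ is taken close to $1$, mirroring the role of $c_3$ in Section~\ref{se:3}.

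The two pieces are estimated as follows. The term $\p(g(K)\ge(1-c)M)$ is a correction: by Proposition~\ref{prop1.5} (equivalently Lemmas~\ref{lem1.6} and \ref{lem1.7}) it is at most $e^{-(2-\epsilon_1)(1-c)M/\log K}$, and with $1-c=O(\epsilon_0)$ and $\log K\asymp\sqrt M$ this is $e^{-O(\epsilon_0)\sqrt M}$. The term $\p(H\ge cM)$ is the heart of the matter: because $K$ is exponentially large the $h$-bias above level $K$ is negligible, so the once-visited sites counted by $H$ are those produced by the i.i.d.\ simple-random-walk excursions that $(G_n)$ makes above level $K$ before reaching $k_1$; accumulating a macroscopic amount $cM$ of them forces a single such excursion to carry $(1-o(1))cM$ once-visited sites, an event of probability $\p(\max_m d(m)\ge(1-o(1))cM)\le e^{2(\Lambda'+\epsilon_1)\sqrt{(1-o(1))cM}}$ by the definition of $\Lambda'$ --- this is the self-reference. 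Inserting these bounds into $\frac1{k_1}+Q(k_1,M)$ and optimizing over $\epsilon_0,\epsilon_1,c$ exactly as in the computation following \eqref{eq:3.5}, one is led to an inequality valid for all small $\epsilon_0>0$ which, upon letting the auxiliary parameters tend to $0$, reads $(\Lambda')^2\ge1$; since $\Lambda'\le0$ this forces $\Lambda'\le-1$, and together with $\Lambda'\ge-1$ this proves Proposition~\ref{prop4.1}.

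The step I expect to be the main obstacle is precisely the passage from $\{H\ge cM\}$ to a bound in terms of $\p(\max_m d(m)\ge\,\cdot\,)$. A naive union bound over the $\asymp k_1$ (exponentially many) excursions above level $K$ loses a factor $k_1$ and is far too lossy to close the iteration. One must instead exploit that producing $\approx M$ once-visited sites above $K$ --- where the typical number is only of order $\sqrt M$ --- is so atypical that it is dominated by the single-excursion mechanism, and then pit the cost of this mechanism against the cost $1/k_1$ of first reaching level $k_1$ so that the two exponents balance. This is the mirror image of the amortization argument of Section~\ref{se:3}, where a large number of excursions was used \emph{in one's favour} to cheapen a rare event; here the same bookkeeping must be carried out in the form of an upper bound. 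Making the negligibility of the $h$-bias above $K$ quantitative, and turning the heuristic ``$H\ge cM$ forces a big excursion'' into a rigorous domination, are the two technical points that will require the most care.
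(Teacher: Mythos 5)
Your reduction to the $h$-process (conditioning on the overall maximum $R$ of $(T_n)$ via \eqref{T-distribution} and identifying the pre-$\tau^T_R$ path with $(G_n)$ stopped at $\tau^G_R$) is sound and coincides with the paper's first step, and your overall self-referential philosophy is also the paper's. But the pivotal inequality $\p\big(\max_{1\le m\le\sigma-1}d(m)\ge M\big)\le\frac1{k_1}+Q(k_1,M)$ is already too lossy to ever certify the exponent $-2$, because it discards the weights $\p(R=k)\asymp k^{-2}$. Indeed, by the \emph{lower} bound of Proposition \ref{prop1.5}, for $\log k\asymp\sqrt M$ one has $Q(k,M)\ge\p(g(k)\ge M)\ge e^{-(2+o(1))M/\log k}$; since $Q$ is nondecreasing in $k$, a short optimization (the two exponents balance at $\log k_1=\sqrt2\,\sqrt M$) shows $\frac1{k_1}+Q(k_1,M)\ge e^{-(\sqrt2+o(1))\sqrt M}$ for \emph{every} choice of $k_1$. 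In particular your hope that $Q(k_1,M)\le e^{-(2-o(1))\sqrt M}$ for $k_1\approx e^{2\sqrt M}$ is false (there $Q(k_1,M)\approx e^{-\sqrt M}$), and your scheme can never yield a rate below $-\tfrac{\sqrt2}{2}$, let alone $-1$. Moreover, with your stated $k_1=\lceil e^{2(-\Lambda'-\epsilon_0)\sqrt M}\rceil$ the term $1/k_1=e^{2(\Lambda'+\epsilon_0)\sqrt M}$ already exceeds $e^{2\Lambda'\sqrt M}$, so the self-consistency inequality you would extract reads $\Lambda'\le\Lambda'+\epsilon_0$ and is vacuous. The paper avoids all of this by keeping the weights $\frac1{i(i+1)}$ and using Lemma \ref{lem4.2} to compare every range to a single cutoff $N'=e^{(-2\Theta+\frac{(1+\Theta)^2}{4})\sqrt M}$, thereby retaining a factor $\approx 1/N'$ in front of $\p(\max_{s\le N'}g(s)\ge M)$; it then needs only the much weaker estimate of Lemma \ref{lem4.3} at $c=-2\Theta+\frac{(1+\Theta)^2}{4}$ to force $(1+\Theta)^2\le0$.

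Even granting a repaired reduction, the estimate of $Q$ itself is missing. The additive split $\{\max_{\rho\le k_1}g(\rho)\ge M\}\subset\{g(K)\ge(1-c)M\}\cup\{H\ge cM\}$ with a single $c$ close to $1$ cannot work: again by the lower bound in Proposition \ref{prop1.5}, $\p(g(K)\ge(1-c)M)$ is genuinely of size $e^{-O(\epsilon_0)\sqrt M}$, so as an additive term it dominates the target $e^{-(2-o(1))\sqrt M}$; one needs the multiplicative structure over a grid of split values, using the Markov property at $\tau^G_K$ so that the two tails multiply, which is exactly \eqref{liminf-cal01}. And the step you flag as the obstacle --- ``$H\ge cM$ forces a single excursion above $K$ to carry $(1-o(1))cM$ at the single-excursion price'' --- is precisely the theorem to be proved, not a fact one may invoke: deterministically the mass can be spread over many excursions, and the number of excursions above $K$ is exponentially large, so a union bound loses a fatal factor. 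The paper's substitute is an amortization you do not supply: the comparison \eqref{eq:lem4.2} of Lemma \ref{lem4.2} is \emph{linear} in the range length, and the heights of the successive overshoot intervals telescope to the block length $N_1$, so the whole block costs only a $\log N_1$ factor (see \eqref{mathcal-G-calculate}); even then a single pass with one intermediate level is insufficient, and the resulting bound on $h(c)$ must be bootstrapped in the range exponent (Lemma \ref{lem4.4} feeding Lemma \ref{lem4.3}), starting from the elementary bound \eqref{main-lem4.9}. Since neither this machinery nor a working substitute appears in your outline, the proposal has a genuine gap at the heart of the upper bound.
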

The upper bound is established through a delicate bootstrapping (self-improving) iterative framework.
The proof relies on three key lemmas (Lemmas \ref{lem4.2}–\ref{lem4.4}). Specifically, for the upper bound, we first observe that once-visited sites that are spatially close exhibit strong correlations. To avoid overestimation inherent in a direct union bound, we introduce a ``cross-graining'' technique. Subsequently, by partitioning the process via stopping times, we show that the maximum of once-visited sites within each segment inherits a self-similar structure relative to the global maximum. This self-similarity further necessitates a self-boosting argument to complete the proof.

Within this framework, the detailed proofs of the three lemmas are deferred, as they are technical yet essential for establishing Proposition \ref{prop4.1}. For any $c > 0$, define
% Define, for any $c>0$,
\begin{equation}
    h(c):=\limsup_{M\to\infty} \frac{\log\left(\p\left(\max\limits_{1\leq s\leq \text{e}^{c\sqrt{M}}}g(s)\geq M\right)\right)}{2\sqrt{M}}.\label{eq:4.0}
\end{equation}
\begin{lemma}\label{lem4.2}
    For any $0<N_1<N_2$ such that $\p\left(\max\limits_{1\leq s\leq N_2}g(s)\geq M\right)\leq 0.1$, we have
    \begin{equation}
        \p\left(\max\limits_{1\leq s\leq N_2}g(s)\geq M\right)\geq \frac{N_2-N_1}{8N_1(\log N_1+1)}\cdot\p\left(\max\limits_{1\leq s\leq N_1}g(s)\geq M\right).\label{eq:lem4.2}
    \end{equation}
\end{lemma}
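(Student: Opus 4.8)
The plan is to establish this inequality via a restart/renewal argument coupled to the growth of the walk's maximum. The quantity $\max_{1 \le s \le N_1} g(s)$ records whether the $h$-process $(G_n)$, watched up to the time it first reaches $N_1$, ever accumulates $M$ once-visited sites. I want to ``copy'' this rare event into a fresh window $[N_1, N_2)$. The natural device is the strong Markov property of $(G_n)$ at the hitting time $\tau^G_{N_1}$: conditionally on $\mathcal F^G_{\tau^G_{N_1}}$, the shifted process $(G_{n + \tau^G_{N_1}})$ behaves like an $h$-process started from $N_1$, and — crucially — the once-visited set it creates while climbing from $N_1$ up to, say, $N_2$ does not interfere with, and is in fact comparable to, the once-visited set created by a fresh $h$-process run on that range. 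The point is that within the window $[N_1,N_2)$, sites get visited exactly once in essentially the same way a genuine $h$-process from scratch would visit them, after an appropriate spatial shift, because the $h$-transition probabilities $(G_n \pm 1)/2G_n$ are close to $1/2$ when $G_n$ is large (of order $N_1$).

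First I would set up the event on which the shifted walk, started from $N_1$, reaches $N_2$ without ever dropping back to $N_1$ — or more precisely, a clean ``good climb'' event — and bound its probability from below. Using the martingale $1/G_n$ (already invoked in the proof of Lemma \ref{lem1.6}, equation \eqref{mart-anal}) one computes that starting from $N_1$ the probability the $h$-process reaches $N_2$ is bounded below by something like $\frac{N_2 - N_1}{2 N_1 (N_2 - ?)}$; the relevant combinatorial factor that survives is of order $\frac{N_2 - N_1}{N_1}$, and the extra $\frac{1}{\log N_1 + 1}$ accounts for the expected number of excursions/once-visited sites one can afford to ``waste'' in the reattachment — this matches the $\sum_{a \le N_1} 1/a \le 1+\log N_1$ harmonic sum appearing throughout Section \ref{Tech-esti}. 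Second, on this good-climb event I would show that the once-visited set of the original walk up to $\tau^G_{N_2}$ contains (a shifted copy of) the once-visited set of an independent $h$-process run up to hitting $N_1$; combined with the strong Markov property this gives $\p(\max_{s \le N_2} g(s) \ge M) \ge \p(\text{good climb}) \cdot \p(\max_{s \le N_1} g(s) \ge M)$, up to the subtlety that the good climb itself may already have created some once-visited sites, which only helps. The hypothesis $\p(\max_{s\le N_2} g(s) \ge M) \le 0.1$ is there to ensure we are genuinely in the rare-event regime so that the reattachment loss is not swamped — it lets us absorb lower-order corrections into the stated constant $8$.

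The main obstacle I anticipate is the second step: making precise the sense in which the once-visited sites produced in the fresh window $[N_1, N_2)$ are ``the same as'' those of an $h$-process from scratch. The difficulty is that a site $s \in [N_1, N_2)$ is once-visited for $(G_n)$ up to $\tau^G_{N_2}$ iff, after the walk first reaches $s$, it climbs strictly above $s$ and never returns — exactly the characterization used in the proof of Lemma \ref{lem1.6} (the displayed condition involving $G_{n_1} > x_i$). For the restarted walk from $N_1$ this is governed by the same optional-stopping computation \eqref{mart-anal}, so the structure transfers cleanly; the care needed is only in handling the boundary site $N_1$ itself and the walk's behavior near it, which is where the factor $8$ (rather than a sharper constant) buys us room. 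Once this coupling is in place, the inequality \eqref{eq:lem4.2} follows by multiplying the climb probability and the restarted rare-event probability and bookkeeping the harmonic factors.
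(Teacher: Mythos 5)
There is a genuine gap, and it is structural. Your argument has the multiplicative form $\p\left(\max_{1\le s\le N_2}g(s)\ge M\right)\ge \p(\text{good climb})\cdot\p\left(\max_{1\le s\le N_1}g(s)\ge M\right)$, with the prefactor $\frac{N_2-N_1}{8N_1(\log N_1+1)}$ realized as the probability of a single restart/climb event. But that prefactor is not a probability: in the regime where the lemma has any content (and in the regime where it is actually applied, e.g.\ $N_1$ arbitrary and $N_2=N'=\mathrm{e}^{c\sqrt M}$ in the proof of Proposition \ref{prop4.1}, or $N_2/N_1$ exponentially large in Lemma \ref{lem4.4}) it is far larger than $1$, so no single-event, single-strong-Markov-restart argument can produce it --- one restart can only give a constant $\le 1$ in front of $\p(\max_{1\le s\le N_1}g(s)\ge M)$. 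Your heuristic for the climb probability is also inverted: by \eqref{mart-anal}, the chance that the walk, after first hitting $N_1$, stays above $N_1$ until reaching $N_2$ is $\frac{N_2}{2N_1(N_2-N_1)}\approx\frac{1}{2N_1}$, which is small, not of order $\frac{N_2-N_1}{N_1}$. The factor $\frac{N_2-N_1}{N_1(\log N_1+1)}$ does not come from one climb; it is a \emph{count of independent trials}.

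The paper's proof supplies exactly this missing repeated-trials mechanism. It launches an attempt at each new running maximum: from $G_{\beta_i'}$ the walk either climbs $N_1-1$ before dropping by $1$ (a ``good interval'', probability $\frac{G_{\beta_i'}+N_1-1}{N_1G_{\beta_i'}}$ by optional stopping), or fails, and the next attempt starts one above the new maximum. Conditionally on being good, the segment is an \emph{exact} copy in law of $(G_n)_{1\le n\le\tau^G_{N_1}}$ (no ``transition probabilities close to $1/2$'' coupling is needed, which is fortunate, since such a comparison of rare-event probabilities over a long window would itself be delicate), and since each attempt starts at a fresh maximum its once-visited sites are once-visited sites of the whole walk. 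The expected growth of the maximum between consecutive good intervals is at most $N_1\sum_{j\le N_1}\frac1j\le N_1(\log N_1+1)$, so by Markov's inequality at least $K\approx\frac{N_2-N_1}{2N_1(\log N_1+1)}$ good intervals are completed below level $N_2$ with probability $\ge\frac12$, yielding $\p(\max_{1\le s\le N_2}g(s)\ge M)\ge\frac12\bigl(1-(1-p)^K\bigr)$ with $p=\p(\max_{1\le s\le N_1}g(s)\ge M)$. The hypothesis $\p(\max_{1\le s\le N_2}g(s)\ge M)\le 0.1$ is then used precisely to linearize this bound (via $-\frac14\log(1-2q)\ge \frac{K}{8\,\cdot\,}\,p$-type estimates) into \eqref{eq:lem4.2}; it is not there merely to ``absorb lower-order corrections''. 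Without the decomposition into many independent good intervals and the final linearization step, the stated inequality cannot be reached by your route.
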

Suppose 
\begin{equation}\label{eq:3.1}
     \Theta:=\limsup_{M\to\infty} \frac{\log\left(\p\left(\max\limits_{1\leq m\leq \sigma-1}\curvevisitonenumber{m}\geq M\right)\right)}{2\sqrt{M}}.
\end{equation}
\begin{lemma}\label{lem4.3}
    For any $0<c<-2\Theta+\frac{(1+\Theta)^{2}}{2}$,
    \footnote{The motivation for writing the expression as $-2\Theta+\frac{(1+\Theta)^2}{2}$ is to clearly show that it exceeds $-2\Theta$ by precisely $\frac{(1+\Theta)^2}{2}$. The same logic applies to the analogous expressions that follow.}
    \begin{equation}
        h(c)\leq \frac{c}{2}+\Theta-\frac{(1+\Theta)^2}{4}.\label{eq:lem4.3}
    \end{equation}
\end{lemma}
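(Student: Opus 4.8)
The plan is to establish Lemma~\ref{lem4.3} by a change-of-measure (tilting) argument that extracts from the event $\{\max_{1\le s\le e^{c\sqrt M}} g(s)\ge M\}$ a contribution of once-visited sites accumulated over a single excursion of $(T_n)$, so that the exponential cost decomposes into the cost of the $h$-process reaching level $e^{c\sqrt M}$ plus the cost of a single excursion producing many once-visited sites. First I would fix a small parameter $\delta>0$ and a splitting $M = M_1 + M_2$ with $M_1 = (1-\lambda)^2 M$, $M_2 = \lambda^2 M$ for a well-chosen $\lambda\in(0,1)$ to be optimized at the end; the idea is that a typical realization of $\{g(s)\ge M\}$ for some $s\le e^{c\sqrt M}$ must, after removing the $O(M_1)$ once-visited sites that can be produced "cheaply" while the walk climbs to a high level, still leave roughly $M_2$ once-visited sites created inside one outward excursion from that level. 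Concretely, run the $h$-process up to the first time its running maximum reaches $K:=\lceil e^{c'\sqrt M}\rceil$ for an intermediate $c'<c$; the probability of this is $K^{-1} = e^{-c'\sqrt M}$, contributing $-c'\sqrt M / (2\sqrt M)\cdot 2 = -c'$ to the exponent (with the convention of the $2\sqrt M$ normalization this is the term $c/2$ after optimizing $c'\uparrow c$). On this event, the number of once-visited sites below level $K$ is distributed as $g(K)$, and by Proposition~\ref{prop1.5} producing $\approx M_1$ of them costs $e^{-(2+o(1))\sqrt{M_1}\cdot\sqrt M /\sqrt M}$... more precisely $\exp(-(2+o(1))(M_1/\log^2 K)\log K) = \exp(-(2+o(1))M_1/(c'\sqrt M))$, which I will want to arrange to be negligible by choosing the split so that $M_1/\sqrt M = o(\sqrt M)$ fails — so instead $M_1$ should scale like $\sqrt M \cdot (\text{something})$; the cleaner route is to let the climb itself supply $M_1 \asymp \sqrt M$ once-visited sites for free along the way and charge the remaining $\approx M$ sites to one excursion.

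The heart of the argument is the single-excursion bound: after the running maximum reaches $K$, the walk makes an outward excursion (in the sense of the stopping times $\beta_i,\gamma_i$ from the lower-bound construction, reinterpreted here as an upper bound), and the maximum number of once-visited sites created within that excursion is distributed as $\max_{1\le m\le\sigma-1} d(m)$, whose upper-tail exponent is governed by $\Theta$ via \eqref{eq:3.1}: $\p(\max_m d(m)\ge x M)\le \exp(-(2-o(1))(-\Theta)\sqrt{xM}) = \exp(2(\Theta-o(1))\sqrt{xM})$. So I would write, for $s\in[K,e^{c\sqrt M}]$,
\begin{align*}
\p\Big(\max_{1\le s\le e^{c\sqrt M}} g(s)\ge M\Big)
&\le \p(\tau^G_K<\infty)\cdot \p\big(g(K)\ge (1-\lambda)^2 M\big)\\
&\quad + \p(\tau^G_K<\infty)\cdot \sup_{\text{excursion}}\p\big(\text{excursion contributes}\ \ge \lambda^2 M\big) + (\text{error}),
\end{align*}
and then balance: $\p(\tau^G_K<\infty) = e^{-c'\sqrt M}$ gives exponent $-c'$ (normalized); the excursion factor gives exponent $2\Theta\lambda$ (since $\sqrt{\lambda^2 M} = \lambda\sqrt M$); and the cheap-climb term, after choosing $c'$ just below $c$, contributes $-(2-o(1))(1-\lambda)$ from the $g(K)\ge(1-\lambda)^2 M$ bound via Proposition~\ref{prop1.5} with the level $K = e^{c\sqrt M}$ so that $\log^2 K = c^2 M$ and the exponent of $\p(g(K)\ge (1-\lambda)^2 M)$ is $-(2-o(1))\cdot((1-\lambda)^2 M/c^2 M)\cdot c\sqrt M /\sqrt M\cdot\ldots$ — I will need to track this carefully, but the upshot is that summing the exponents and normalizing by $2\sqrt M$ yields $h(c)\le \tfrac c2 + \Theta\lambda - \tfrac{(1-\lambda)^2}{?}$, and optimizing over $\lambda$ (taking $\lambda = 1+\Theta$, which lies in $(0,1)$ since $-1\le\Theta<0$) produces exactly $\tfrac c2 + \Theta - \tfrac{(1+\Theta)^2}{4}$.

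The main obstacle I anticipate is the bookkeeping at the junction between the "climb" phase and the "single excursion" phase: the decomposition of a generic trajectory achieving $g(s)\ge M$ into (i) once-visited sites permanently frozen below the level $K$ at the moment the running max first hits $K$, and (ii) once-visited sites created afterward inside a single excursion above $K$ is not literally a clean product — a site visited once below $K$ before time $\tau^G_K$ might be revisited later, and conversely the "$d(m)$" count for an excursion above level $K$ is only a subset of the global once-visited set. I will need the analogue of Lemma~\ref{lem1.3} (once-visited sites live at the extremes of sub-excursions) together with a union bound over the $O(e^{c\sqrt M})$ possible values of $s$ and over which excursion is the "heavy" one — the latter costing only a polynomial-in-$M$ factor since the number of excursions before level $e^{c\sqrt M}$ is at most $e^{c\sqrt M}$ and each excursion-count contributes subexponentially. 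A secondary technical point is that Proposition~\ref{prop1.5} is stated for $C$ in a fixed compact interval $[c_1,c_2]$, so I must check that the relevant ratio $M/\log^2 K = M/(c'^2 M) = 1/c'^2$ indeed stays in such an interval as $M\to\infty$, which it does once $c$ is fixed; and Lemma~\ref{lem4.2} will be invoked to pass between "$\max_{s\le N_1}$" and "$\max_{s\le N_2}$" thresholds so that the $h(c)$ defined with the running maximum up to $e^{c\sqrt M}$ can be compared across nearby values of $c$ without losing exponential factors.
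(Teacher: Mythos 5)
There is a genuine gap: your accounting of where the $c/2$ comes from is wrong, and it breaks the whole architecture. The quantity $h(c)$ in \eqref{eq:4.0} is defined for the conditioned walk $(G_n)$ (the $h$-process), which reaches every level almost surely, so $\p(\tau^G_K<\infty)=1$, not $K^{-1}=e^{-c'\sqrt M}$; the hitting cost $1/K$ belongs to the unconditioned excursion $(T_n)$ and plays no role inside $h(c)$. Consequently there is no factor $e^{-c'\sqrt M}$ to harvest, and in any case you have its sign backwards: such a factor would contribute $-c/2$ to the normalized exponent, whereas the term in \eqref{eq:lem4.3} is $+c/2$, a \emph{loss} coming from a union bound over the exponentially many levels (or blocks, or candidate ``heavy'' excursions) below $e^{c\sqrt M}$ --- a cost you explicitly dismiss as ``polynomial in $M$'', which it is not at this scale. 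If your ledger were taken literally (climb cost $-c/2$ plus negative excursion cost), the resulting bound would contradict the elementary lower bound $h(c)\ge -1/c$ that follows from Proposition \ref{prop1.5} applied at $s=\lfloor e^{c\sqrt M}\rfloor$, e.g.\ at $c$ close to $2$ when $\Theta=-1$; so the derivation cannot produce the stated inequality, and the final ``optimize $\lambda=1+\Theta$'' step is asserted rather than computed (your displayed decomposition is also a sum of single-factor terms rather than a sum over splittings of products, so the two exponents never combine).

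The second missing idea is that reducing the sites created above level $K$ to ``one heavy excursion distributed as $\max_m d(m)$'' is not justified: after $\tau^G_K$ the $h$-process makes many down-excursions from successive running maxima, and the once-visited count can reach $M$ with no single excursion being atypically large. Controlling this spread is exactly what the paper's machinery does: the good-interval comparison of Lemma \ref{lem4.2}, the expectation summation in \eqref{mathcal-G-calculate}, and the split over how much of the count predates a block boundary; and it is this step that forces the range restriction and the self-referential bootstrap --- within a block of length $N_1=e^{(2c_2-2)\sqrt M}$ one needs \eqref{eq:lem4.3} \emph{already known} at the rescaled argument $(2c_2-2)/\sqrt{1-c}$ (Case 1 of Lemma \ref{lem4.4}), alongside the $\Theta$-based bounds (Cases 2--3) and Proposition \ref{prop3.1}. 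The paper then obtains Lemma \ref{lem4.3} not by a one-shot tilting but by the base case $c\le 1$ from \eqref{main-lem4.9} together with iterating Lemma \ref{lem4.4} to the fixed point $-2\Theta+\frac{(1+\Theta)^2}{2}$. Your proposal has no substitute for this induction in $c$, so even with the bookkeeping repaired it would at best recover the regime $c\le 1$, not the full range of the lemma.
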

We now proceed to prove Proposition \ref{prop4.1} on the basis of the two lemmas established above.

\begin{proof}[Proof of Proposition \ref{prop4.1}]
 Note that by Lemma \ref{lem4.3}, $h(-2\Theta+\frac{(1+\Theta)^2}{4})\leq -\frac{(1+\Theta)^2}{8}$. Denote $N'=\lfloor\text{e}^{(-2\Theta+\frac{(1+\Theta)^2}{4})\sqrt{M}}\rfloor$, then by Lemma \ref{lem4.2},
\begin{eqnarray*}
    &&\p\left(\max\limits_{1\leq m\leq \sigma-1}d(m)\geq M\right)\\
    &\leq& \sum_{i=1}^{N'}\p\left(\max\limits_{1\leq m\leq \sigma-1}d(m)\geq M\,\Big|\, \max\limits_{1\leq n\leq \sigma-1}T_n=i\right)\cdot\p\left(\max\limits_{1\leq n\leq \sigma-1}T_n=i\right)+\p\left(\max\limits_{1\leq n\leq \sigma-1}T_n>N'\right)\\
    &\overset{\eqref{T-distribution}}{\leq} &\sum_{i=1}^{N'}\frac{\p\left(\max\limits_{1\leq s\leq i}g(s)\geq M\right)}{i(i+1)}+\frac{1}{N'+1}\\
    &\overset{\eqref{eq:lem4.2}}{\leq}  &\sum_{i=1}^{N'}\frac{8(\log(N')+1)\cdot\p\left(\max\limits_{1\leq s\leq N'}g(s)\geq M\right)}{(N'-i)(i+1)}+\frac{1}{N'+1}\\
    &\leq &\frac{1}{N'}\cdot\p\left(\max\limits_{1\leq s\leq N'}g(s)\geq M\right)\cdot(2\log N'+1)^2+\frac{1}{N'}\leq \frac{1}{N'}(2\log(N')+3)^2.
\end{eqnarray*}
% Therefore,
Consequently,
\begin{align*}
     2\Theta\sqrt{M}\leq \left(2\Theta-\frac{(1+\Theta)^2}{4}\right)\sqrt{M}
\end{align*}
which implies $\Theta=-1$, then we get Proposition \ref{prop4.1}.
\end{proof}

We now turn to the proofs of the two lemmas presented at the beginning of this subsection.
\begin{proof}[Proof of Lemma \ref{lem4.2}]
    Define $M_n^G:=\max_{1\leq k\leq n}G_k$. Define $\beta_1'=1$, for $i\geq 1$, define stopping times
    \begin{align*}
        &\gamma_i':=\inf\left\{n>\beta_{i}',\ G_n\in \{G_{\beta_i'}-1,G_{\beta_i'}+N_1-1\}\right\};\\    &\beta_{i+1}':=\inf\{n>\gamma_i',\ G_n=M^G_{\gamma_i'}+1\}.
    \end{align*}
    For $(G_n)$, an interval $[\beta_i',\gamma_i']$ is called a good interval if and only if $G_{\gamma_i'}=G_{\beta_i'}+N_1-1$. Otherwise, we call it a bad interval. Let $\eta_i$ be the end time of the $i$-th good interval. Similarly to the approach in \eqref{mart-anal},
    \begin{equation*}           \p\left([\beta_i',\gamma_i']\text{~is~a~good~interval}\,\Big|\,\mathcal{F}_{\beta_i'}^G\right)=\frac{(G_{\beta_i'}+N_1-1)}{N_1G_{\beta_i'}}
    \end{equation*}
    and
    \begin{equation*}
        \e\left(M^G_{\gamma_i'}-M^G_{\beta_i'}\,\Big|\,\mathcal{F}_{\beta_i'}^G\right)=\sum_{j=1}^{N_1}\frac{(G_{\beta_i'}+j-1)}{jG_{\beta_i'}}\leq N_1\sum_{j=1}^{N_1}\frac{1}{j}\p\left([\beta_i',\gamma_i']\text{~is~a~good~interval}\,\Big|\,\mathcal{F}_{\beta_i'}^G\right),
    \end{equation*}
    which gives $\e(M_{\eta_{i+1}}^G-M_{\eta_i}^G\,\big|\, \mathcal{F}^G_{\eta_i})\leq N_1\sum\limits_{j=1}^{N_1}\frac{1}{j}$. Since $M^G_{\gamma_1'}=N_1$. By Markov's inequality,
    \begin{equation}
        \p\left(M^G_{\eta_{\lfloor \frac{N_2-N_1}{2N_1(\log N_1+1)}\rfloor+1}}\geq N_2\right)\leq \frac{1}{2}\label{eq:4.1}.
    \end{equation}
    Conditioned on $[\beta_i',\gamma_i']$ being a good interval, $(G_k)_{\beta_i'\le k\le\gamma_i'}$ is a SRW starting at $G_{\beta_i'}$, conditioned on hitting $G_{\beta_i}+N_1-1$ before hitting $G_{\beta_i}-1$, stopping at hitting $G_{\beta_i'}+N_1-1$, independent with $\mathcal{F}^G_{\beta_i'}$. Therefore, $(G_{n+\beta_i'-1}-G_{\beta_i'-1})_{1\leq n\leq \gamma_i'-\beta_i'+1}$ has the same distribution as $(G_n)_{1\leq n\leq \tau^G_{N_1}}$, which implies
    \begin{equation}
        \p\left(\max_{G_{\beta_i'}\leq s\leq G_{\gamma_i'}}g(s)\geq M\,\Big|\, \mathcal{F}^G_{\beta_i'},\ [\beta_i',\gamma_i']\text{~is~a~good~interval}\right)\geq  \p\left(\max_{1\leq s\leq N_1}g(s)\geq M\right).\label{eq:4.2}
    \end{equation}
    Combining \eqref{eq:4.1} and \eqref{eq:4.2} gives
    \begin{equation}
        \p\left(\max_{1\leq s\leq N_2}g(s)\geq M\right)\geq \frac{1}{2}\left(1-\left(1- \p\left(\max_{1\leq s\leq N_1}g(s)\geq M\right)\right)^{\lfloor\frac{N_2-N_1}{2N_1(\log N_1+1)}\rfloor+1}\right).\label{eq:4.3}
    \end{equation}
    Then we have
    \begin{align*}
         \p\left(\max\limits_{1\leq s\leq N_2}g(s)\geq M\right)\geq &-\frac{1}{4}\log\left(1-2\p\left(\max\limits_{1\leq s\leq N_2}g(s)\geq M\right)\right)\\
         \geq& \frac{N_2-N_1}{8{N_1}(\log(N_1)+1)}\cdot \p\left(\max_{1\leq s\leq N_1}g(s)\geq M\right).
    \end{align*}
\end{proof}

An immediate corollary of Lemma \ref{lem4.2} is the following.
Taking $N_1=\lfloor \text{e}^{c_1\sqrt{M}}\rfloor$ and $N_2=\lfloor \text{e}^{c_2\sqrt{M}}\rfloor$ in Lemma \ref{lem4.2}, if \eqref{eq:lem4.3} holds for some constant $c_2\in (0,-2\Theta+\frac{(1+\Theta)^{2}}{2})$, then it holds for all $c_1\in(0,c_2)$. By Proposition \ref{prop1.5} and a union bound, for any $c,\epsilon>0$,
\begin{equation}\label{main-lem4.9}
    \p\left(\max\limits_{1\leq s\leq \text{e}^{c\sqrt{M}}}g(s)\geq M\right)\leq \text{e}^{c\sqrt{M}-(2-\epsilon)c^{-1}\sqrt{M}}
\end{equation}
which implies 
\begin{align}
   h(c)\leq \frac{c}{2}-\frac{1}{c}. \label{eq:h-roughbound}
\end{align}

We now prove Lemma \ref{lem4.3}. First we state an auxiliary result containing the self-boosting method.
\begin{lemma}\label{lem4.4}
    If \eqref{eq:lem4.3} holds for $c\leq c_1$, then it holds for any 
    $$c=c_2<\min\left(1+\frac{1-\Theta}{4}c_1,-2\Theta+\frac{(1+\Theta)^{2}}{2}\right).$$
\end{lemma}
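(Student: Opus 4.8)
The plan is to prove Lemma~\ref{lem4.4} by a boosting argument that converts the iteration step (Lemma~\ref{lem4.3}) and the span-doubling estimate (Lemma~\ref{lem4.2}) into an improved bound on $h(c_2)$ assuming the bound already holds up to $c_1$. Fix $c_2$ in the stated range and set $M\to\infty$. The key is to chain together many ``boosting'' stages: starting from a random walk that has been forced to hit a level $\mathrm{e}^{c_1\sqrt{M'}}$ for a suitable intermediate scale $M'<M$, and then letting it continue until it reaches the much larger level $\mathrm{e}^{c_2\sqrt{M}}$, one accumulates once-visited sites in two ways: the sites already collected at the $c_1$-scale (governed by $h(c_1)$, hence by the inductive bound \eqref{eq:lem4.3}), plus the sites collected along the way to the bigger level (governed by $\Theta$, i.e.\ by the definition \eqref{eq:3.1} of the per-excursion maximum). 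The first step is therefore to set up the decomposition of the excursion reaching level $\mathrm{e}^{c_2\sqrt{M}}$ into the initial portion up to level $\mathrm{e}^{c_1\sqrt{M'}}$ and the remaining portion, choosing the split parameter $M'$ (equivalently the fraction of the target count $M$ allotted to each portion) so as to optimize the resulting exponent.

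Concretely, I would let the initial portion contribute a count of roughly $\theta M$ once-visited sites for a free parameter $\theta\in(0,1)$, and the remainder contribute $(1-\theta)M$. By the Markov property applied at the first hitting time of the intermediate level (using, as in the proof of Proposition~\ref{prop3.1}, that $(T_n)$ restarted at an outward record behaves like a fresh copy of $(T_n)_{1\le n\le\sigma}$), the probability of the joint event factorizes, up to polynomial-in-$M$ corrections, into a factor of size $\exp(-2h(c_1)\sqrt{\theta M\cdot(\text{scale ratio})})$-type coming from the inductive hypothesis on $h$, and a factor of size $\exp(2\Theta\sqrt{(1-\theta)M})$ coming from $\Theta$. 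Here one must be careful to convert between the ``span reaches $\mathrm{e}^{c\sqrt M}$'' normalization in the definition of $h(c)$ and the per-excursion normalization in $\Theta$; Lemma~\ref{lem4.2} is exactly the tool that lets one inflate the span from one scale to another at the cost of only a polynomial factor, so it should be invoked to bridge $N_1=\mathrm{e}^{c_1\sqrt{M'}}$ and $N_2=\mathrm{e}^{c_2\sqrt{M}}$ (or an appropriate intermediate value). Collecting exponents, one gets an inequality of the form
\begin{equation*}
    h(c_2)\le \inf_{\theta\in(0,1)}\Big[\text{(linear combination of }c_1,c_2,\Theta,h(c_1)\text{ with weights depending on }\theta)\Big],
\end{equation*}
and substituting the inductive bound $h(c_1)\le \frac{c_1}{2}+\Theta-\frac{(1+\Theta)^2}{4}$ and optimizing over $\theta$ should collapse the right-hand side to exactly $\frac{c_2}{2}+\Theta-\frac{(1+\Theta)^2}{4}$, which is \eqref{eq:lem4.3} at $c=c_2$. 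The constraint $c_2<1+\frac{1-\Theta}{4}c_1$ is presumably exactly the condition under which the optimizing $\theta$ lies in $(0,1)$ and the chaining does not overshoot, while $c_2<-2\Theta+\frac{(1+\Theta)^2}{2}$ keeps us inside the regime where $h(c_2)$ is the relevant (negative-exponent) quantity and Lemma~\ref{lem4.3} applies.

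The main obstacle I anticipate is bookkeeping the scale conversions cleanly: the quantities $h(c)$, $\Theta$, and the $X_i$-sum estimates all use slightly different normalizations (span at a deterministic level vs.\ maximal once-visited count over a whole excursion vs.\ number of outward records), and the polynomial corrections from Lemma~\ref{lem4.2} and from Stirling-type bounds must be shown to be genuinely $\exp(o(\sqrt M))$ so they disappear in the $\limsup$. A secondary delicate point is ensuring the two pieces of the decomposition are independent enough to multiply probabilities — this requires invoking the strong Markov property at precisely the right stopping time and checking that the ``initial portion'' event is $\mathcal{F}$-measurable at that time, exactly the kind of argument already used around \eqref{eq:3.2}–\eqref{eq:3.5}. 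Once the exponent inequality is assembled, the optimization over $\theta$ is a short calculus exercise, and verifying that the optimizer respects $\theta\in(0,1)$ under the stated hypothesis on $c_2$ should reproduce the two constraints in the lemma. I would finish by noting that iterating Lemma~\ref{lem4.4} starting from the base case $c_1\le $ (some explicit small value where \eqref{eq:lem4.3} holds trivially, e.g.\ from Proposition~\ref{prop1.5}) propagates the bound up to the threshold $-2\Theta+\frac{(1+\Theta)^2}{2}$, which is what the subsequent proof of Proposition~\ref{prop4.1} will use to pin down $\Theta=-1$.
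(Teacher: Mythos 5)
There is a genuine gap. Your plan reads like a lower-bound construction: you pick a single intermediate level, one split parameter $\theta$, multiply two probabilities, and optimize. But Lemma \ref{lem4.4} is an upper bound on $\p\bigl(\max_{1\le s\le \mathrm{e}^{c_2\sqrt M}}g(s)\ge M\bigr)$, so you must cover \emph{all} ways and \emph{all} locations at which the count can first exceed $M$. The paper does this by chopping $[1,\mathrm{e}^{c_2\sqrt M}]$ into $\mathrm{e}^{(2-c_2)\sqrt M}$ blocks of the carefully chosen length $N_1=\lfloor \mathrm{e}^{(2c_2-2)\sqrt M}\rfloor$ and paying the union-bound cost explicitly (it contributes the $1-\tfrac{c_2}{2}$ term in the final arithmetic); within a block it splits the count into the ``old'' part $g(kN_1)$, controlled by the sharp fixed-level estimate of Proposition \ref{prop1.5} (exponent $c\,c_2^{-1}$ per fraction $c$, see \eqref{liminf-cal02}), and the ``new'' within-block part, controlled via \eqref{mathcal-G-calculate} by the inductive hypothesis \eqref{eq:lem4.3} at the rescaled parameter $(2c_2-2)/\sqrt{1-c}$ — and the hypothesis $c_2<1+\frac{1-\Theta}{4}c_1$ is exactly what keeps this rescaled parameter below $c_1$ in the relevant regime $c\le\frac{1+\Theta}{2}$ (Case 1), not a condition about an optimizer $\theta$ lying in $(0,1)$. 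None of these three structural ingredients (spatial union bound, Proposition \ref{prop1.5} for the accumulated count at a fixed level, the case analysis in $c$ using $h(l)\le\frac{2\Theta+l}{2}$) appears in your outline.

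The step that would actually fail is your claim that the portion beyond the intermediate level is ``governed by $\Theta$'' up to polynomial corrections. The remaining stretch of the $h$-process $G$, from level $\mathrm{e}^{c_1\sqrt{M'}}$ up to $\mathrm{e}^{c_2\sqrt M}$, is an exponentially long conditioned trajectory; relating its once-visited count to the single-excursion quantity $\Theta$ requires undoing the conditioning to reach the high level, which costs a factor that is exponential in $\sqrt M$ (this is precisely the content of the inequality $h(l)\le \frac{2\Theta+l}{2}$ used in Case 2, where the extra $\tfrac l2$ is that cost), and bounding the tail of the new count over such a long stretch is essentially the original problem at scale $c_2$ — so your scheme is either circular or omits an exponential factor. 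Likewise, Lemma \ref{lem4.2} does not ``inflate the span at the cost of only a polynomial factor'': the transfer factor $\frac{N_2-N_1}{8N_1(\log N_1+1)}$ is exponential in $\sqrt M$ when $N_2/N_1$ is, and in the paper it is used in the opposite direction (to bound single-block probabilities from above by the $N_1$-scale probability, \eqref{eq:4.5}). Because these exponential corrections are exactly what must balance against the union-bound term and Proposition \ref{prop1.5} to produce $\frac{c_2}{2}+\Theta-\frac{(1+\Theta)^2}{4}$, your bookkeeping cannot be repaired by ``a short calculus exercise over $\theta$''; the block decomposition and the three-case analysis of the paper (or an equivalent device) are needed.
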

The proof of Lemma \ref{lem4.4} is postponed. Assuming it for the moment, we proceed to prove Lemma \ref{lem4.3}.
\begin{proof}[Proof of Lemma \ref{lem4.3} assuming Lemma \ref{lem4.4}]
Since $\Theta+\frac{(1+\Theta)^2}{4}\geq -1$, it follows from \eqref{eq:h-roughbound} that \eqref{eq:lem4.3} holds for $0<c\leq 1$.
Let $\theta = \inf\big\{s : h(s) > -\frac{s}{2} - \Theta + \frac{(1+\Theta)^2}{4}\big\}$. By Lemma \ref{lem4.4},
\begin{equation*}
    \theta\geq \min\left(1+\frac{1-\Theta}{4}\theta,-2\Theta+\frac{(1+\Theta)^2}{2}\right),
\end{equation*}
which implies $\theta\geq -2\Theta+\frac{(1+\Theta)^2}{2}$. Then we complete the proof of Lemma \ref{lem4.3}.   
\end{proof}

\begin{proof}[Proof of Lemma \ref{lem4.4}]
    Define event
    \begin{equation*}
         \mathcal{G}_{s_1,s_2}:=\left\{ s_1<s\leq s_2 : \exists \ k,\ 1 \leq k \leq \tau^G_{s_2}, \text{ such that } G_k = s, \text{ and } G_l \neq s \text{ for } 1 \leq l \leq \tau^G_{s_2},\ l \neq k \right\}
    \end{equation*}
Then, $\mathcal{G}_{s_2} \subset \mathcal{G}_{s_1} \cup \mathcal{G}_{s_1,s_2}$ for all $s_1<s_2$.
We assume $c_2 > 1$ and let $N_1 = \lfloor e^{(2c_2 - 2)\sqrt{M}} \rfloor$. With this, define $\tilde{\beta}_{1,k}:=\tau_{kN_1+1}^{G}$. Then, for $i \geq 1$, we define the following stopping times:
    \begin{align*}
        &\tilde\gamma_{i,k}:=\inf\{n>\tilde\beta_{i,k}:\ G_n\in \{G_{\tilde\beta_{i,k}}-1,(k+1)N_1\}\};\\    &\tilde\beta_{i+1,k}:=\inf\{n>\gamma_{i,k}':\ G_n=M^G_{\gamma_{i,k}'}+1\}.
    \end{align*}
    Under the condition that $M^G_{\tilde\gamma_{i,k}}-G_{\tilde\beta_{i,k}}=l-1$, we have 
    $$(G_{n+{\tilde\beta_{i,k}}-1}-G_{{\tilde\beta_{i,k}}})_{1\leq n\leq \tau^G_{{G_{\tilde\beta_{i,k}}+l-1}}} \stackrel{d}{=}(G_n)_{1\leq n\leq \tau^G_{l}}.$$ 
    Take $a_k$ such that $G_{\tilde\gamma_{a_k,k}}=(k+1)N_1$, then for any $M'>0$ and $1\leq i\le a_k$, we have
    \begin{equation}
        \p\left(\left.\max_{G_{\tilde\beta_{i,k}}\leq s<G_{\tilde\beta_{i+1,k}}}|\mathcal{G}_{kN_1,s}|\geq M'\right| M^G_{\tilde\gamma_{i,k}}-G_{\tilde\beta_{i,k}}=l-1,\mathcal{F}^G_{\tilde\beta_{i,k}}\right)=\p\left(\max\limits_{1\le s\le l}g(s)\geq M'\right).\label{eq:4.4}
    \end{equation}
    By Lemma \ref{lem4.2}, for $\p\left(\max\limits_{1\leq s\leq N_1}g(s)\geq M'\right)\leq 0.1$,
    \begin{equation}
        \p(g(l)\geq M')\leq 16(\log N_1+1)lN_1^{-1}\p\left(\max_{1\leq s\leq N_1}g(s)\geq M'\right).\label{eq:4.5}
    \end{equation}
Summing \eqref{eq:4.4} over $1 \leq i \leq a_k$, combining with \eqref{eq:4.5} gives
    \begin{align}\label{mathcal-G-calculate}
        &\p\left(\max_{kN_1+1\leq n\leq (k+1)N_1}\mathcal{G}_{kN_1,n}\geq M'\,\Big|\,\mathcal{F}^G_{\tau^G_{kN_1}}\right)\notag\\
        \leq& \sum_{i=1}^{a_k}\p\left(g(M^G_{\tilde\gamma_{i,k}}-G_{\tilde\beta_{i,k}}+1)\geq M'\Big|\mathcal{F}^G_{\tilde\beta_{i,k}}\right)\notag\\
        \leq& 16(\log N_1+1)N_1^{-1}\sum_{i=1}^{a_k}(M^G_{\tilde\gamma_{i,k}}-G_{\tilde\beta_{i,k}}+1)\cdot\p\left(\max_{1\leq s\leq N_1}g(s)\geq M'\right)\notag\\
        =&16(\log N_1+1)\cdot\p\left(\max_{1\leq s\leq N_1}g(s)\geq M'\right).
    \end{align}
For any $\epsilon > 0$, assume that $\max\limits_{kN_1+1 \leq s \leq (k+1)N_1} g(s) \geq M$. Then there exists an integer $0 \leq i \leq \epsilon^{-1}$ with the property that 
\begin{itemize}
    \item $g(kN_1) \geq \epsilon i M$,
    \item $\max\limits_{kN_1+1 \leq s \leq (k+1)N_1} \mathcal{G}_{kN_1,s} \geq (1 - \epsilon(i+1)) M$.
\end{itemize}
 It follows that
    \begin{eqnarray}\label{liminf-cal01}
        &&\p\left(\max_{kN_1+1\leq s\leq (k+1)N_1}g(s)\geq M\right)\notag\\
        &\leq& \sum_{i=0}^{\lfloor \epsilon^{-1}\rfloor} \p(g(kN_1)\geq \epsilon iM)\cdot\p\left(\max_{kN_1+1\leq s\leq (k+1)N_1}\mathcal{G}_{kN_1,s}\geq (1-\epsilon(i+1))M\,\Big|\,\mathcal{F}^G_{\tau^G_{kN_1}}\right)\notag\\
        &\overset{\eqref{mathcal-G-calculate}}{\le} & M\cdot\sum_{i=0}^{\lfloor \epsilon^{-1}\rfloor} \p(g(kN_1)\geq \epsilon iM)\cdot\p\left(\max_{1\leq s\leq N_1}g(s)\geq (1-\epsilon(i+1))M\right).
    \end{eqnarray}
    By Proposition \ref{prop1.5}, for any $0\leq k\leq N_1^{-1}e^{c_2\sqrt{M}}$ and any $\epsilon'>0$, for sufficient large $M$,
    \begin{equation}\label{liminf-cal02}
        \p(g(kN_1)\geq \epsilon iM)\leq \exp\left({-(2-\epsilon')\epsilon i c_2^{-1}\sqrt{M}}\right).
    \end{equation}
    Putting together \eqref{liminf-cal01} and \eqref{liminf-cal02}, we obtain
    \begin{align}
&\limsup_{M\to\infty}\frac{\log\left(\p\left(\max\limits_{kN_1+1\leq s\leq (k+1)N_1}g(s)\geq M\right)\right)}{2\sqrt{M}}\notag\\
\leq& \max\limits_{c\in [0,1]}\left\{-cc_2^{-1}+\limsup_{M\to\infty}\frac{\log\left(\p\left(\max\limits_{1\leq s\leq N_1}g(s)\geq (1-c)M\right)\right)}{2\sqrt{M}}\right\}.\label{eq:4.7}
    \end{align}
    
    {\bf Case 1. }When $0 \leq c \leq \frac{(1+\Theta)}{2}$, it follows that $\frac{(2c_2 - 2)}{\sqrt{1-c} }\leq c_1$. Then
    \begin{eqnarray*}
        &&\limsup_{M\to\infty}\frac{\log\left(\p\left(\max\limits_{1\leq s\leq N_1}g(s)\geq (1-c)M\right)\right)}{2\sqrt{M}}\\
        &=&\limsup_{M\to\infty}\frac{\sqrt{1-c}\log\left(\p\left(\max\limits_{1\leq s\leq \text{e}^{(2c_2-2)\sqrt{\frac{M}{1-c}}}}g(s)\geq M\right)\right)}{2\sqrt{M}}\\
        &\overset{\eqref{main-lem4.9}}{\leq}& \sqrt{1-c}\left(\frac{c_2-1}{\sqrt{1-c}}+\Theta-\frac{(1+\Theta)^2}{4}\right)=-1+c_2+\sqrt{1-c}\left(\Theta-\frac{(1+\Theta)^2}{4}\right)\\
       &\leq& -1+c_2+\Theta-\frac{(1+\Theta)^2}{4}+cc_2^{-1},
    \end{eqnarray*}
    where the last inequality comes from Proposition \ref{prop3.1}.
    \smallskip
    
    {\bf Case 2. }When $\frac{1+\Theta}{2}\leq c\leq 1-\frac{(c_2-1)^2}{\Theta^2}$, for any $l>0$, since
    \begin{align*}
        \Theta=&\limsup_{M\to\infty}\frac{\log(\p(\max\limits_{1\leq m\leq \sigma-1}\curvevisitonenumber{m}\geq M))}{2\sqrt{M}}\\
        \geq& \limsup_{M\to\infty}\left(\frac{\log(\text{e}^{-(l+2h(l))\sqrt{M}})}{2\sqrt{M}}\right)\geq -\inf\left\{l:h\left(2l\right)=0\right\}.
    \end{align*}
    By Lemma \ref{lem4.2}, for $l\leq -2\Theta$, 
    \begin{equation*}
        h(l)\leq \frac{2\Theta+l}{2}.
    \end{equation*}
    Then we see that
    \begin{align*}
        &\limsup_{M\to\infty}\frac{\log\left(\p\left(\max\limits_{1\leq s\leq N_1}g(s)\geq (1-c)M\right)\right)}{2\sqrt{M}}\\
        =&\limsup_{M\to\infty}\frac{\sqrt{1-c}\log\left(\p\left(\max\limits_{1\leq s\leq \text{e}^{(2c_2-2)\sqrt{\frac{M}{1-c}}}}g(s)\geq M\right)\right)}{2\sqrt{M}}\\
        \leq& \sqrt{1-c}\left(\frac{c_2-1}{\sqrt{1-c}}+\Theta\right)=-1+c_2+\Theta\sqrt{1-c}
        \leq -1+c_2+\Theta-\frac{(1+\Theta)^2}{4}+cc_2^{-1}.
    \end{align*}
    
    {\bf Case 3. }When $1-\frac{(c_2-1)^2}{\Theta^2}\leq c\leq 1$,
    \begin{align*}
        &\limsup_{M\to\infty}\frac{\log\left(\p\left(\max\limits_{1\leq s\leq N_1}g(s)\geq (1-c)M\right)\right)}{2\sqrt{M}}\leq 0
        \leq -1+c_2+\Theta-\frac{(1+\Theta)^2}{4}+cc_2^{-1}.
    \end{align*}
    Putting the three cases together, we conclude that
    \begin{equation}
         \min_{-c\in [0,1]}\left\{-cc_2^{-1}+\limsup_{M\to\infty}\frac{\log\left(\p\left(\max\limits_{1\leq s\leq N_1}g(s)\geq (1-c)M\right)\right)}{2\sqrt{M}}\right\}\leq -1+c_2+\Theta-\frac{(1+\Theta)^2}{4}.\label{eq:4.8}
    \end{equation}
    Combining \eqref{eq:4.7} and \eqref{eq:4.8}, we have
    \begin{align*}
        h(c_2)=&\limsup_{M\to\infty}\frac{\log\left(\p\left(\max\limits_{1\leq s\leq \text{e}^{c_2\sqrt{M}}}g(s)\geq M\right)\right)}{2\sqrt{M}}.\\
        \leq &\limsup_{M\to\infty}\frac{\log\left(\sum\limits_{k=0}^{\lfloor\frac{e^{c_2\sqrt{M}}}{N_1}\rfloor}\p\left(\max\limits_{kN_1+1\leq s\leq (k+1)N_1} g(s)\geq M\right)\right)}{2\sqrt{M}}.\\
        \leq &-\frac{c_2}{2}+1-1+c_2+\Theta-\frac{(1+\Theta)^2}{4}=\frac{c_2}{2}+\Theta-\frac{(1+\Theta)^2}{4}.
    \end{align*}
    which completes the proof of Lemma \ref{lem4.4}.
\end{proof}

\section*{Acknowledgement} 

The authors are grateful to Xinyi Li for his insightful comments, which have greatly improved the manuscript.
All authors are partially supported by the National Natural Science Foundation of China (NSFC) through the NSFC Key Program (Grant No.\ 12231002).

\end{document}